\newcommand\Tstrut{\rule{0pt}{2.9ex}}         
\newcommand\Bstrut{\rule[-1.65ex]{0pt}{0pt}}   
\newcolumntype{C}[1]{>{\centering\arraybackslash}p{#1}}
\newtheorem{thm}{Theorem}[section]
\theoremstyle{theorem}
\newtheorem{conj}[thm]{Conjecture}
\newtheorem{claim}[thm]{Claim}
\newtheorem{cor}[thm]{Corollary}
\newtheorem{remark}[thm]{Remark}
\newtheorem{prop}[thm]{Proposition}
\DeclareMathOperator{\disc}{disc}
\DeclareMathOperator{\End}{End}
\DeclareMathOperator{\Pic}{Pic}
\newcommand{\F}{\mathbb{F}}
\newcommand{\N}{\mathbb{N}}
\newcommand{\Z}{\mathbb{Z}}
\newcommand{\Q}{\mathbb{Q}}
\newcommand{\C}{\mathbb{C}}
\newcommand{\Pgotic}{\mathfrak{P}}
\newcommand{\pgotic}{\mathfrak{p}}
\newcommand{\Ocal}{\mathcal{O}}
\newcommand{\agotic}{\mathfrak{a}}
\newcommand{\Ogotic}{\mathcal{O}}
\newcommand{\Qbar}{\overline{\Q}}
\newcommand{\Agotic}{\mathfrak{A}}
\renewcommand{\mod}{\operatorname{mod}\ }
\newcommand{\Mod}[1]{\ \mathrm{mod}\ #1}
\renewcommand{\leq}{\leqslant}
\renewcommand{\geq}{\geqslant}
\title[On singular moduli that are $S$-units]{On singular moduli that are $S$-units}
\subjclass[2010]{11G15}
\keywords{Number Theory, Singular moduli}
\author[Francesco Campagna]{ Francesco Campagna \\ \\ \small{University of Copenhagen, Department of Mathematics, Universitetsparken 5} \\
\small{campagna@math.ku.dk}, ORCID: 0000-0001-6249-1915.} 
\address{University of Copenhagen, Department of Mathematics, Universitetsparken 5, Copenhagen}
\email{campagna@math.ku.dk}
\begin{document}

\begin{abstract}
Recently Yu. Bilu, P. Habegger and L. K\"uhne proved that no singular modulus can be a unit in the ring of algebraic integers. In this paper we study for which sets $S$ of prime numbers there is no singular modulus that is an $S$-unit. Here we prove that if $S$ is the set of all primes $p$ congruent to 1 modulo 3, no singular modulus is an $S$-unit. We then give some remarks on the general case and we study the norm factorizations of a special family of singular moduli. 
\end{abstract}

\maketitle

\section{INTRODUCTION}  

Elliptic curves with complex multiplication and their $j$-invariants, the so-called \textit{singular moduli}, have been studied for a long time in number theory because of their many interesting arithmetic properties. For instance, singular moduli are always algebraic integers and they generate ring class fields relative to orders in imaginary quadratic fields (\cite{Cox}). In 1985, Gross and Zagier proved, under certain technical assumptions, a formula concerning the prime factorization of the difference of two singular moduli (\cite{Gross}). This formula has been recently generalized by Lauter and Viray in \cite{Viray}. 

A few years ago Masser, motivated by some effective results of Andr\'e-Oort type (see for instance \cite{Zannier} and \cite{Kuhne}), raised the question whether it is possible that a singular modulus is a unit in the ring of algebraic integers. Habegger has shown in \cite{Habbeger} that there exists at most a finite number of singular moduli that are algebraic units. However the argument used in the proof is non-effective since it relies on an equidistribution result. In a more recent work of Bilu, Habegger and K\"uhne this question has been completely settled with the proof of the following
\begin{thm}[Theorem 1.1 in \cite{Bilu}]  \label{Habbeger}
There are no singular moduli that are units in the ring of algebraic integers.
\end{thm}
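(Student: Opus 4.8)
The plan is to prove the statement through the archimedean size of the norm of a singular modulus. Fix a singular modulus $j$ attached to an imaginary quadratic order of discriminant $\Delta<0$. By the theory of complex multiplication its conjugates over $\Q$ are exactly the singular moduli of discriminant $\Delta$, indexed by the reduced binary quadratic forms $(a,b,c)$ of that discriminant, and
\[
N_{\Q(j)/\Q}(j)=\prod_{(a,b,c)} j(\tau_{a,b,c}),\qquad \tau_{a,b,c}=\frac{-b+\sqrt{\Delta}}{2a},
\]
is a rational integer. Thus $j$ is an algebraic unit if and only if this integer equals $\pm 1$, i.e. if and only if $\sum_{(a,b,c)}\log\bigl|j(\tau_{a,b,c})\bigr|=0$. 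The whole proof amounts to showing that this sum is strictly positive once $|\Delta|$ is large, and to clearing the finitely many remaining discriminants by direct computation.

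First I would set up the archimedean estimates. Writing $q=e^{2\pi i\tau}$ and using $j=q^{-1}+744+\cdots$, for a reduced form one has $\mathrm{Im}\,\tau_{a,b,c}=\sqrt{|\Delta|}/(2a)$, so
\[
\log\bigl|j(\tau_{a,b,c})\bigr|=\frac{\pi\sqrt{|\Delta|}}{a}+O(1).
\]
The principal form $(1,b,c)$ therefore contributes a term of size $\pi\sqrt{|\Delta|}$, which is enormous, while every form contributes a quantity that is positive as soon as $|j|>1$. Hence the only way the total sum can fail to be large and positive is through conjugates with $|j(\tau_{a,b,c})|<1$. Since $j$ has a triple zero at $\rho=e^{2\pi i/3}$ and does not vanish elsewhere in the standard fundamental domain, such conjugates are exactly the CM points lying close to $\rho$.

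The crux of the argument, and the step I expect to be the main obstacle, is to control these ``small'' conjugates. A CM point $\tau_{a,b,c}$ is close to $\rho$ only when $a\approx c$ and $b\approx a$, which forces $a\approx\sqrt{|\Delta|/3}$ together with a divisibility constraint relating $a$ and $\Delta$; I would use this to show both that there are only $O(1)$ such conjugates and that none of them can be too close to $\rho$. Quantitatively, a Liouville-type lower bound for the distance from $\tau_{a,b,c}$ to $\rho$, combined with the order-three vanishing of $j$, yields an explicit polynomial lower bound $\log\bigl|j(\tau_{a,b,c})\bigr|\ge -C\log|\Delta|$ for each small conjugate. Summing, the negative contribution is at most $O(\log|\Delta|)$, which is dwarfed by the principal term $\pi\sqrt{|\Delta|}$. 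Making the constants effective, this proves $|N_{\Q(j)/\Q}(j)|>1$ for all $|\Delta|$ beyond an explicit bound, and the finitely many discriminants below it are checked by hand.

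Finally, it is worth recording a complementary non-archimedean point of view that also motivates the $S$-unit refinements below. Since $j(\rho)=0$ is itself the singular modulus of discriminant $-3$, attached to the curve with CM by $\Z[\rho]$, a prime $p$ divides $N_{\Q(j)/\Q}(j)$ exactly when some conjugate curve has supersingular reduction with $j$-invariant $0$ at a place above $p$; this happens for $p\equiv 2\pmod 3$ satisfying a non-splitting condition for $\Delta$, and the Gross--Zagier factorization of the difference of singular moduli guarantees such a prime once $|\Delta|$ is large. This gives a second route to a nontrivial prime factor of the norm, but because it requires coprimality hypotheses on the discriminants I would keep the archimedean argument as the main engine, the delicate near-$\rho$ estimate being the part that genuinely needs care.
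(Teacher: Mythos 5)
You should know at the outset that the paper does not prove this statement at all: Theorem \ref{Habbeger} is imported verbatim from Bilu--Habegger--K\"uhne \cite{Bilu} and used as a black box, so your proposal can only be measured against the argument of \cite{Bilu} itself. Your outline does follow the genuine strategy of that line of work (Habegger \cite{Habbeger}, then \cite{Bilu}): write $\log|N_{\Q(j)/\Q}(j)|$ as $\sum_{(a,b,c)}\log|j(\tau_{a,b,c})|$ over reduced forms, extract the main term $\pi\sqrt{|\Delta|}$ from the principal form, and control the negative contributions from conjugates near $\rho$, where $j$ has its triple zero. Your per-conjugate Liouville bound is also correct: for a primitive reduced form other than $(1,1,1)$ one checks $|\tau_{a,b,c}-\rho|\gg |\Delta|^{-1/2}$, hence $\log|j(\tau_{a,b,c})|\ge -C\log|\Delta|$.

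The genuine gap is the counting step, which is precisely where the difficulty of the theorem lives. The region $\{\tau:\ |j(\tau)|<1\}$ is a \emph{fixed} neighbourhood of $\rho$ of positive hyperbolic measure, and by Duke's equidistribution theorem the number of CM points of discriminant $\Delta$ lying in it is asymptotically a positive constant times $h(\Delta)$ --- not $O(1)$ as you claim. The potential negative contribution therefore has order up to $h(\Delta)\log|\Delta|$, and since $h(\Delta)$ can be as large as a constant times $|\Delta|^{1/2}\log|\Delta|$, it is \emph{not} dwarfed by $\pi\sqrt{|\Delta|}$; one must instead balance it against the positive contributions of all the other conjugates, which requires effective control of how many CM points can crowd at each scale $\delta$ around $\rho$ --- a Liouville bound limits how close a single point can get, but says nothing about their number at distance $|\Delta|^{-1/2+o(1)}$. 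This is exactly why Habegger's 2015 result \cite{Habbeger} was only an ineffective finiteness statement (Duke's theorem rests on ineffective lower bounds for $L(1,\chi)$), and the actual content of \cite{Bilu} is a set of unconditional counting estimates for CM points near $\rho$ (and near $i$), together with a treatment of the remaining discriminants that is far from a routine check, the resulting numerical bound being enormous. Your closing non-archimedean remark is a reasonable heuristic, but as you concede it needs coprimality hypotheses and cannot replace the analytic argument. As written, then, your proposal reproduces the correct skeleton of the known proof but assumes away, in the phrase ``there are only $O(1)$ such conjugates,'' the one step that makes the theorem hard.
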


The theorem above opened the way to a number of interesting questions. For instance one may ask, inspired by the work of Gross and Zagier, whether there are pairs of singular moduli whose difference is a unit. Work in this direction appears in \cite{Yingkun}, where Li proves that the difference of two singular moduli is never a unit. 

Another possible research path is the following: let $S$ be a finite set of prime numbers. What is the number of singular moduli that are $S$-units? By the theorem above, if $S= \emptyset$ the answer is zero. However if $S \neq \emptyset$ there can certainly exist singular moduli that are $S$-units. For instance, for $S=\{2,3\}$, the integers $12^3, -32^3$ and $-96^3$ are three singular moduli that are $\{2,3\}$-units (they are the $j$-invariants of elliptic curves having complex multiplication by $\Z[\sqrt{-1}], \Z[\frac{1+\sqrt{-11}}{2}]$ and $\Z[\frac{1+\sqrt{-19}}{2}]$ respectively). We are then looking for some finiteness statement that will in general depend on the choice of the set $S$. Recently Herrero, Menares and Rivera-Letelier announced a proof of the finiteness of the set of singular moduli that are $S$-units for any finite set of primes $S$. However, to the best of our knowledge, their argument is not effective. In this paper we give the first effective results in this direction; in particular our main theorem is the following

\begin{thm} \label{main theorem}
Let $S$ be the set of rational primes congruent to $1$ modulo $3$. If a singular modulus is an $S$-unit, then it is a unit.
\end{thm}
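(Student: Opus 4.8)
The plan is to translate the $S$-unit condition into a statement about which rational primes divide the absolute norm $N(j)=\pm H_\Delta(0)$ of the singular modulus $j$ (here $\Delta$ is the discriminant of the corresponding order $\mathcal{O}$ in an imaginary quadratic field $K$), and then to control these primes through the reduction theory of CM elliptic curves. Since $j$ is an algebraic integer, it is an $S$-unit precisely when every rational prime dividing $N(j)$ lies in $S$, i.e. is $\equiv 1 \pmod 3$. The basic observation is that a rational prime $p$ divides $N(j)$ exactly when some singular modulus of discriminant $\Delta$ is $\equiv 0$ modulo a prime $\mathfrak{p}\mid p$ of the ring class field; equivalently, the associated CM curve $E$ reduces at $\mathfrak{p}$ to the curve $E_0$ with $j=0$, which has CM by $\Z[\omega]$, $\omega=e^{2\pi \ii/3}$. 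I will use that $E_0$ reduces to an ordinary curve when $p$ splits in $\Q(\sqrt{-3})$, i.e. when $p\equiv 1\pmod 3$, to a supersingular curve when $p\equiv 2\pmod 3$, and that in characteristic $2$ the curve $j=0$ is the \emph{unique} supersingular one.

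First I would show: if a prime $p\equiv 1\pmod 3$ divides $N(j)$ and $\Delta\neq -3$, then necessarily $K=\Q(\sqrt{-3})$. Indeed, the reduction $\tilde E\cong\tilde E_0$ is then ordinary, so $\End(\tilde E)$ is an order in an imaginary quadratic field; as $j=0$ forces $\mu_6\subset\Aut(\tilde E_0)$ and hence $\Z[\omega]\subset\End(\tilde E_0)$, that field is $\Q(\sqrt{-3})$. Because reduction is injective on endomorphisms, the inclusion $\mathcal{O}=\End(E)\hookrightarrow\End(\tilde E)$ forces $K\subseteq\Q(\sqrt{-3})$, hence $K=\Q(\sqrt{-3})$. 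Consequently, whenever $K\neq\Q(\sqrt{-3})$ no prime $\equiv 1\pmod 3$ divides $N(j)$, so the $S$-unit hypothesis leaves no admissible prime divisor at all and $N(j)=\pm 1$: the modulus is a unit.

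It remains to treat $K=\Q(\sqrt{-3})$, i.e. $\Delta=-3f^2$, and this is the main obstacle. Here singular moduli genuinely can be divisible by primes $\equiv 1\pmod 3$ (through the case $p\mid f$ of ordinary reduction, which is presumably the point of the ``special family'' studied later), so the clean dichotomy of the previous paragraph fails and I must instead exhibit a blocking prime. The plan is to reduce at $2$, which is inert in $\Q(\sqrt{-3})$. A CM curve with $\End(E)=\mathcal{O}\subset\Q(\sqrt{-3})$ has potential good reduction everywhere, and its reduction $\tilde E$ at a prime above $2$ cannot be ordinary: an ordinary curve over $\overline{\F}_2$ has commutative endomorphism algebra an imaginary quadratic field in which $2$ splits, whereas $\mathcal{O}\hookrightarrow\End(\tilde E)$ would force that field to be $\Q(\sqrt{-3})$, in which $2$ is inert. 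Hence $\tilde E$ is supersingular, and since $j=0$ is the only supersingular invariant in characteristic $2$ we obtain $j\equiv 0\pmod{\mathfrak{p}}$ for some $\mathfrak{p}\mid 2$, so $2\mid N(j)$. As $2\not\equiv 1\pmod 3$, such a $j$ can never be an $S$-unit, and the implication of the theorem holds vacuously in this case.

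Combining the two cases yields the theorem: an $S$-unit singular modulus must have $N(j)=\pm1$ and is therefore a unit. The technical heart is a uniform application of Deuring's reduction theory — injectivity of reduction on endomorphism rings, the ordinary/supersingular dichotomy governed by the splitting of $p$ in the CM field, and the special role of the prime $2$ where the $j=0$ curve is the unique supersingular one — with the genuinely delicate point being the self-referential case $K=\Q(\sqrt{-3})$, where it is divisibility by $2$, rather than the absence of primes $\equiv 1\pmod 3$, that obstructs the $S$-unit property.
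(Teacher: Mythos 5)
Your proposal is correct and takes essentially the same route as the paper: the identical two-case split in which, for $K\neq\Q(\sqrt{-3})$, no prime $p\equiv 1 \pmod 3$ can divide $N_{\Q(j)/\Q}(j)$ (so an $S$-unit is already a unit), while for orders in $\Q(\sqrt{-3})$ a small prime that does not split completely forces supersingular reduction onto the unique supersingular invariant $0$, making the hypothesis vacuous. The only cosmetic differences are that you inline the Deuring-theoretic arguments that the paper invokes as Theorem \ref{Cox} and Theorem \ref{Lang} (via injectivity of reduction on endomorphism rings and the ordinary/supersingular dichotomy), and that you use only the inert prime $2$ where the paper's Claim \ref{claim} establishes divisibility by $2$, $3$ and $5$ (working out $p=3$, which ramifies, and noting the others are analogous).
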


Combining Theorem \ref{main theorem} with Theorem \ref{Habbeger} above we immediately get

\begin{cor}\label{main corollary}
Let $S$ be the set of rational primes congruent to 1 modulo 3. Then no singular modulus is an $S$-unit.
\end{cor}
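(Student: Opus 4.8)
The plan is to deduce Corollary \ref{main corollary} directly from Theorem \ref{main theorem} together with Theorem \ref{Habbeger}: the former shows that every singular modulus which is an $S$-unit is already a unit, and the latter forbids unit singular moduli, so no singular modulus can be an $S$-unit. The substance therefore lies entirely in Theorem \ref{main theorem}, and I will concentrate on how to establish it. The cleanest reformulation is: for every prime $p \equiv 1 \pmod 3$ and every nonzero singular modulus $j$, no prime above $p$ divides $j$. Granting this, if $j$ is an $S$-unit then $j \neq 0$ (the zero element is not a unit in any localization), every rational prime dividing its absolute norm $\prod_\sigma \sigma(j) = \pm H_\Delta(0)$ must lie in $S$, i.e.\ be $\equiv 1 \pmod 3$; but the claim forbids exactly these, so the norm is $\pm 1$ and $j$ is a unit.

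To prove the claim I would argue by contradiction and pass to geometry. Suppose $j \neq 0$ has discriminant $\Delta$ and some prime $\mathfrak p \mid p$, with $p \equiv 1 \pmod 3$, divides a conjugate of $j$; since every conjugate is again a singular modulus of discriminant $\Delta$, I may assume $j \equiv 0 \pmod{\mathfrak p}$. Let $E$ be an elliptic curve with $\End(E) = \mathcal O_\Delta$ and $j(E)=j$; as CM curves have potential good reduction everywhere, after a finite base extension I may assume $E$ has good reduction $\bar E$ at $\mathfrak p$, so that $j(\bar E) = j \bmod \mathfrak p = 0$. The decisive input is that in characteristic $p \equiv 1 \pmod 3$ the curve with $j$-invariant $0$ is \emph{ordinary}: it carries CM by $\mathcal O_{-3}$, and $p \equiv 1 \pmod 3$ is precisely the condition that $p$ splits in $\Q(\sqrt{-3})$, forcing ordinary reduction with endomorphism ring the maximal order $\mathcal O_{-3}$. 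Hence $\bar E$ is ordinary with $\End(\bar E) = \mathcal O_{-3}$. By Deuring's reduction theorem, good ordinary reduction induces an isomorphism $\End(E) \xrightarrow{\sim} \End(\bar E)$, whence $\mathcal O_\Delta \cong \mathcal O_{-3}$, so $\Delta = -3$ and $j = 0$, a contradiction.

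The step I expect to be the main obstacle is making this reduction-theoretic dichotomy fully rigorous for arbitrary orders, not merely maximal ones or the classical coprime case. One must control what happens at primes dividing the conductor $f$ of $\mathcal O_\Delta$ and at primes of potential bad reduction: the point to verify is that $\bar E$ is ordinary exactly when $p$ splits in the CM field \emph{and} $p \nmid f$, while $p \mid f$ forces supersingular reduction. This is what rules out the dangerous scenario of a curve with CM by a non-maximal order $\mathcal O_{-3 p^{2a}}$ inside $\Q(\sqrt{-3})$ reducing to $j=0$; such reduction would be supersingular, hence cannot hit the ordinary curve $j=0$ in characteristic $p \equiv 1 \pmod 3$. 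It is only in the genuinely $p$-maximal situation that the ordinary conclusion $j(\bar E)=0$ is available, and there Deuring delivers the isomorphism of endomorphism rings.

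An alternative route, closer to the references cited in the introduction, reads off the same vanishing from the factorization of $\mathrm{Res}(H_\Delta, H_{-3}) = \pm\prod_\sigma \sigma(j)$, since $H_{-3}(X)=X$: by the Gross--Zagier formula and its extension by Lauter--Viray, a rational prime can divide this resultant only if it is non-split in $\Q(\sqrt{-3})$, which excludes every $p \equiv 1 \pmod 3$. The obstacle there shifts to the bookkeeping needed to invoke those factorization formulas without their usual coprimality and fundamentality hypotheses, so in either approach the real work is the uniform handling of non-maximal orders and ramified or conductor-dividing primes.
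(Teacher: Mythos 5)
Your first paragraph reproduces exactly the paper's deduction of the corollary from Theorem \ref{main theorem} and Theorem \ref{Habbeger}, and your treatment of singular moduli whose CM field is not $\Q(\sqrt{-3})$ (via the congruence $j \equiv 0$ and the splitting of $p$, i.e.\ Theorem \ref{Cox}) is also the paper's first step. But your central claim --- that for every prime $p \equiv 1 \pmod 3$ no prime above $p$ divides any nonzero singular modulus --- is false, and the paper's own data refute it: for conductor $f=7$ (discriminant $-3\cdot 7^2$) the table gives $|N_{\Q(j)/\Q}(j)| = 2^{30}\cdot 3^9\cdot 5^6\cdot 7\cdot 17^3$, with $7 \equiv 1 \pmod 3$ dividing the norm, and likewise $13$ and $19$ appear for conductors $13$ and $19$. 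Your attempted rescue --- that a curve with CM by $\mathcal{O}_{-3p^{2a}}$ must have supersingular reduction above $p$ because $p$ divides the conductor --- misstates Deuring theory: by Theorem \ref{Lang} supersingularity depends only on whether $p$ splits in the CM \emph{field}, with no conductor condition, and at a split prime dividing the conductor $f$ the reduction is ordinary with endomorphism ring the order whose conductor is the prime-to-$p$ part of $f$. So a curve with CM by the conductor-$p^a$ order in $\Q(\sqrt{-3})$ genuinely reduces to $j=0$ above $p$, which is exactly why the conductor hypothesis in Theorem \ref{Cox} (case $K_1=K_2$) cannot be dropped. The same defect sinks your alternative Lauter--Viray route: primes dividing the conductor do occur in that resultant factorization (compare $v_p(F(0))=\frac{1}{3}$ in the proof of Theorem \ref{prime powers}).

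Moreover, even a corrected claim excluding $p \mid f$ could not close the argument: for a singular modulus $j$ of discriminant $-3p^{2a}$ with $p \equiv 1 \pmod 3$, nothing in your proposal rules out $N_{\Q(j)/\Q}(j) = \pm p^k$, i.e.\ that $j$ is an $S$-unit without being a unit. The paper handles this family by an orthogonal idea that your proposal lacks: it proves (Claim \ref{claim}) that $2$, $3$ and $5$ --- none of which is $\equiv 1 \pmod 3$ --- \emph{always} divide $N_{\Q(j)/\Q}(j)$ for $j$ relative to any order in $\Q(\sqrt{-3})$, because each of $2,3,5$ is non-split in $\Q(\sqrt{-3})$, forcing supersingular reduction, and in characteristics $2,3,5$ the unique supersingular $j$-invariant is $0$. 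Hence no such $j$ is an $S$-unit at all, and Theorem \ref{main theorem} holds vacuously on this family. Producing a non-$S$ prime in the norm is the essential step, and your argument contains no mechanism for it.
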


Notice that the set of primes considered in Corollary \ref{main corollary} is infinite: hence the corollary gives an effective answer to the problem of singular $S$-units for infinitely many finite sets $S$ of rational primes.

As we will see, the proof of Theorem \ref{main theorem} naturally leads to the study of $j$-invariants of elliptic curves that have complex multiplication by orders in $\Q(\sqrt{-3})$. Hence in the final part of this paper some properties of this family of singular moduli are pointed out and studied. 

We develop our article as follows. In section \ref{preliminaries} we fix the notation and we recall some theorems from the theory of complex multiplication. In section \ref{Proof} we give the proof of Theorem \ref{main theorem}. The main ingredient of the proof will be Deuring's reduction theory for CM elliptic curves. Conversely, in section \ref{Remarks} we prove that for every finite set $S$ of prime numbers there is always a singular modulus that is not an $S$-unit (Proposition \ref{Proposition}). In section \ref{finale}, motivated also by numerical computations, we discuss some properties of the singular $j$-invariants relative to orders in $\Q(\sqrt{-3})$. Here we will make heavy use of the formulas proved in \cite{Viray}. As an appendix we include a table containing some explicit factorizations for the norms of these $j$-invariants. We dedicate a small final section to the study of the case $j-1728$, kindly suggested by Habegger.

\section{PRELIMINARIES AND NOTATION} \label{preliminaries}

The main references for this section are \cite{Cox}, \cite{Lang}, \cite{Silverman} and \cite{Silverman2}. We begin by recalling the notion of $S$-units. Let $K$ be a number field and $S$ a set of rational primes. We say that an element $x\in K$ is an \textit{$S$-unit} if, for every prime $p \not \in S$ and for every prime $\pgotic$ of $K$ lying over $p$, the element $x$ is a unit in the ring of integers of $K_{\pgotic}$ (here $K_{\pgotic}$ denotes the completion of $K$ at the prime $\pgotic$). In other words $x$ is an $S$-unit if and only if $x\neq 0$ and all the primes appearing in the prime ideal factorization of $x\Ogotic_K$ lie above primes in $S$. If $x$ is an algebraic integer, this is equivalent to require that all the primes dividing its absolute norm over $\Q$ are in $S$. 

The principal object of study of this paper are singular moduli. A singular modulus is the $j$-invariant $j(E)$ of an elliptic curve $E$ defined over $\C$ with complex multiplication. For a positive integer $D$ congruent to $0$ or $3$ modulo $4$, we say that a singular modulus is of discriminant $-D$ if it is the $j$-invariant of an elliptic curve with complex multiplication by an order of discriminant $-D$. Recall that, given an imaginary quadratic order $\mathcal{O}$, there is a bijection between the class group $\operatorname{Cl}(\mathcal{O})$ and the set of elliptic curves $E/\C$ with complex multiplication by $\mathcal{O}$ up to $\C$-isomorphism. For a proper fractional ideal $\mathfrak{a}$ of $\mathcal{O}$ we then denote by $j(\mathfrak{a})$ the $j$-invariant of any elliptic curve $E/\C$ whose $\C$-isomorphism class corresponds to the class $[\mathfrak{a}] \in \operatorname{Cl}(\mathcal{O})$ under this bijection.

As we already mentioned in the introduction, singular moduli are algebraic integers. Given a singular modulus of discriminant $-D$ we will denote by $H_D(x) \in \Z[x]$ its minimal polynomial over $\Q$ and we call it the \textit{Hilbert class polynomial of discriminant} $-D$. It is well-known that the roots of the Hilbert class polynomial of discriminant $-D$ are all the singular moduli of discriminant $-D$. In particular $\deg H_D(x)=h_D$, where $h_D$ is the class number of the unique order of discriminant $-D$, and all the singular moduli of discriminant $-D$ have the same absolute norm over the rationals. Indeed, if $N_{L/\Q}(\cdot )$ denotes the usual norm map from a number field $L$ to $\Q$, we see that  $|N_{\Q(j)/\Q}(j) |=|H_D(0)|$  for every $j$ of discriminant $-D$. We finally recall that a discriminant $-D$ is called a \textit{fundamental discriminant} if it is the discriminant of a maximal order in an imaginary quadratic field. If $j$ is a singular modulus relative to a fundamental discriminant $-D$ then $\Q(\sqrt{-D}, j)$ is the Hilbert class field of $\Q(\sqrt{-D})$. 

In the sequel we will need some theorems from the reduction theory of elliptic curves with complex multiplication. We recall here the main results that will be used in the article. If $E/L$ is an elliptic curve with complex multiplication defined over a number field $L$, then $E$ has potential good reduction at every prime of $L$, see \cite[II, Theorem~6.4]{Silverman2}. This means that for every prime $\pgotic$ of $L$ there exists a finite extension $F/L$ such that $E$ has good reduction at every prime of $F$ lying over $\pgotic$. The reduction type of CM elliptic curves was first described by Deuring in \cite{Deuring} and it is nowadays known as "Deuring theory". We recall some important results from that theory, starting with the following theorem (compare with Theorem 13.12 in \cite{Lang}).

\begin{thm}\label{Lang}
Let $E$ be an elliptic curve defined over a number field $L$ and with complex multiplication by an order $\Ocal=\Z+f\Ocal_K$ of conductor $f$ in an imaginary quadratic field $K$. Let $\Pgotic$ be a prime of $L$ lying over a rational prime $p$ where $E$ has good reduction $\widetilde{E}$. Then the reduction $\text{mod } \Pgotic$ induces an injective homomorphism $\End_{L}(E) \hookrightarrow \End_{\overline{\F}_p}(\widetilde{E})$. Moreover:
\begin{enumerate}
\item if $p$ is ramified or inert in $K/\Q$, then $\End_{\overline{\F}_p}(\widetilde{E})$ is isomorphic to an order in a quaternion algebra;
\item if $p$ is split in $K/\Q$, then $\End_{\overline{\F}_p}(\widetilde{E})\cong \Z+f'\Ocal_K$ where $f=f'p^n$ with $\gcd(f',p)=1$;
\end{enumerate}
\end{thm}
 For every singular modulus $j$ denote by $\Ocal_j$ and $K_j$ respectively the endomorphism order and the endomorphism algebra of an elliptic curve $E$ with invariant $j$. For each rational prime $p$ let $\mathcal{J}_p$ be the set of singular moduli $j$ such that $p$ splits in $K_j$ and $p$ does not divide the conductor of the order $\Ocal_j$. By choosing a prime $\Pgotic$ of $\Qbar$ above $p$ (\textit{i.e.} a compatible system of finite primes lying above $p$) we obtain a reduction map
$$\mathcal{J}_p \to \overline{\F}_p.$$
By Theorem \ref{Lang} the image of this map lies in the set of non-supersingular invariants modulo $p$. However, one can prove more, as the following theorem shows (see Theorem 13.13 in \cite{Lang}).

\begin{thm} \label{Lang2}
The map $\mathcal{J}_p \to \overline{\F}_p$ is a bijection of $\mathcal{J}_p$ with the set of non-supersingular invariants modulo $p$.
\end{thm}

\begin{cor} \label{Cox}
Let $\Ocal_1$ and $\Ocal_2$ be orders in imaginary quadratic fields $K_1$ and $K_2$ of conductors $f_1, f_2$ respectively. For $i=1,2$ let $\agotic_i$ be a proper fractional $\Ocal_i$-ideal such that $j(\agotic_1) \neq j(\agotic_2)$. Suppose that $L$ is a number field containing $j(\agotic_1)$ and $j(\agotic_2)$, and let $\Pgotic$ be a prime of $L$ lying over a rational prime $p$. If $j(\agotic_1) \equiv j(\agotic_2) \Mod \Pgotic$ then either $K_1=K_2$ and $p$ divides $f_1f_2$ or $p$ is non-split in $K_1$ and $K_2$.
\end{cor}

\begin{proof}
For $i=1,2$ let $E_i$ be an elliptic curve defined over $L$ with complex multiplication by $\Ocal_i$ and $j$-invariant $j(\agotic_i)$. After base-changing to a finite field extension, we can assume without loss of generality that the elliptic curves $E_i$ have good reduction at $\Pgotic$ and that all their endomorhisms are defined over the base field (see \cite[II, Theorem~2.2~(b)]{Silverman2}).
The hypothesis $j(\agotic_1) \equiv j(\agotic_2) \mod \Pgotic$ then implies that the reduced elliptic curves $\widetilde{E_1}$ and $\widetilde{E_2}$ are isomorphic over $\overline{\F}_p$. In particular their endomorphism rings over $\overline{\F}_p$ must be isomorphic. By Theorem \ref{Lang} these rings are either both isomorphic to an order in a quaternion algebra or to an order in an imaginary quadratic field. In the first case, $p$ is non-split in $K_1$ and in $K_2$. In the second case, $p$ splits in both $K_1$ and $K_2$, and we have
$$\End_{\overline{\F}_p}(\widetilde{E_i}) \cong \Z+f_i'\Ocal_{K_i} \hspace{1cm} \text{for} \ i=1,2$$
where $\Ocal_{K_i}$ is the ring of integers of the field $K_i$ and $f_i=f_i'p^{n_i}$, $\gcd(f_i',p)=1$. Since the two endomorphism rings must be isomorphic, we must have $K_1=K_2$ and $f_1'=f_2'$. If $p$ divides one among $f_1$ and $f_2$ we are done. Suppose on the contrary that $p\nmid f_1f_2$; then, by the discussion above, we have $f_1=f_2$. But then $j(\agotic_1)$ and $j(\agotic_2)$ cannot be congruent modulo $\Pgotic$ by Theorem \ref{Lang2}. This contradiction concludes the proof.
\end{proof}

The results above link the splitting behaviour of primes in imaginary quadratic fields to the reduction type of elliptic curves with complex multiplication. They will be crucial in the proof of our main result.

\section{PROOF OF THEOREM \ref{main theorem}} \label{Proof}
Throughout this section, for a given set of rational primes $S$, a singular modulus which is an $S$-unit will be called a singular $S$-unit.
\begin{proof}[Proof of Theorem \ref{main theorem}]
Let $S$ be the set of rational primes congruent to $1$ mod $3$. First we show that every singular $S$-unit corresponding to a quadratic field $K\neq \Q(\sqrt{-3})$ is in fact a unit. This follows almost immediately from Corollary $\ref{Cox}$.

 Let $j$ be a singular modulus corresponding to  a field $K\neq \Q(\sqrt{-3})$, let $L=\Q(j)$ and consider a prime $\Pgotic$ of $L$ lying over a rational prime $p$ congruent to $1$ mod $3$. Notice that $j_0=0$ is the unique singular modulus corresponding to the maximal order of the quadratic field $\Q(\sqrt{-3})$ and that $p$ splits in the latter. Then by Corollary \ref{Cox} it follows that the prime $\Pgotic$ cannot divide $j$. Otherwise we would have $j \equiv j_0 \mod \Pgotic$, which contradicts the fact that $p$ splits in $\Q(\sqrt{-3})$. In particular every singular $S$-unit associated with an imaginary quadratic field of discriminant less than $-3$ is in fact a unit. 
 
We are left to study all the singular $S$-units relative to the field $\Q(\sqrt{-3})$. For these we are going to prove the following 
\begin{claim} \label{claim}
Let $j$ be a singular modulus associated with an order $\Ocal_j$ contained in $\Q(\sqrt{-3})$. Then the primes $2,3$ and $5$ divide $N_{\Q(j)/\Q}(j)$, where $N_{\Q(j)/\Q}(\cdot)$ denotes the usual norm function on number fields.
\end{claim}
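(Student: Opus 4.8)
The plan is to prove divisibility by each of $p \in \{2,3,5\}$ separately, in each case by exhibiting a prime of $\Q(j)$ above $p$ at which $j$ reduces to $0$. The tool is Deuring's supersingular reduction criterion (Theorem \ref{Lang}), combined with the classical fact that $j=0$ is the \emph{unique} supersingular $j$-invariant in characteristics $2$, $3$ and $5$. Note that if $j=0$ (the maximal order) the claim is trivial, since $N_{\Q(j)/\Q}(0)=0$; the argument below in fact covers this case uniformly.

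First I would fix an elliptic curve $E$ defined over a number field $L \supseteq \Q(j)$ with $j(E)=j$ and complex multiplication by the order $\Ocal_j \subset \Q(\sqrt{-3})$. Since $E$ has potential good reduction everywhere, for each $p \in \{2,3,5\}$ and each prime $\Pgotic$ of $L$ above $p$ I can pass to a finite extension $F/L$ and a prime $\mathfrak{Q}$ of $F$ above $\Pgotic$ at which the base change of $E$ acquires good, i.e.\ non-degenerate, reduction $\tilde E$. This is precisely the hypothesis needed to invoke Theorem \ref{Lang}, and passing to $F$ sidesteps any condition on the conductor of $\Ocal_j$, since the supersingular criterion depends only on the field $K=\Q(\sqrt{-3})$.

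Next comes the splitting input. In $K=\Q(\sqrt{-3})$ the prime $3$ ramifies, while $2$ and $5$, being $\equiv 2 \pmod 3$, are inert; in particular none of $2,3,5$ splits completely in $K$. Hence Theorem \ref{Lang} forces $\tilde E$ to be supersingular for each of the three primes. Because the only supersingular $j$-invariant in characteristic $2$, $3$ and $5$ is $j=0$, I conclude $j(\tilde E)=0$, that is $j \equiv 0 \pmod{\mathfrak{Q}}$. Restricting the valuation $v_{\mathfrak{Q}}$ to $\Q(j)$ yields a prime of $\Q(j)$ above $p$ dividing $j$, whence $p \mid N_{\Q(j)/\Q}(j)$; carrying this out for $p=2,3,5$ proves the claim.

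The essential point — and the reason the argument is confined to $\{2,3,5\}$ rather than to all non-split primes — is the uniqueness of the supersingular invariant: it is exactly for these primes that the supersingular locus is the single value $0$, so that supersingular reduction of a $\Q(\sqrt{-3})$-curve is forced to be reduction to $0$. For larger inert or ramified primes there are several supersingular invariants and the reduction of $j$ need not vanish, so the clean conclusion is special to $2,3,5$. I expect the only delicate bookkeeping to be the passage to $F$ guaranteeing good reduction together with the descent of the divisibility back to $N_{\Q(j)/\Q}(j)$; both are routine, so the real content lies in identifying the supersingular locus and the splitting type of $2,3,5$ in $\Q(\sqrt{-3})$.
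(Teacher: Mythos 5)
Your proposal is correct and follows essentially the same route as the paper: potential good reduction, Deuring's criterion (Theorem \ref{Lang}) applied to the non-split primes $2,3,5$ in $\Q(\sqrt{-3})$, and the uniqueness of the supersingular $j$-invariant (equal to $0$, since $1728\equiv 0 \bmod 3$) in characteristics $2$, $3$ and $5$. The paper spells this out only for $p=3$ and declares the other two cases analogous, while you additionally make explicit the inertia of $2$ and $5$, the trivial case $j=0$, and the descent of the divisibility to $\Q(j)$ --- all minor elaborations of the same argument.
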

It is clear that Theorem \ref{main theorem} follows immediately from the claim. Indeed a prime $p$ divides $N_{\Q(j)/\Q}(j)$ if and only if there exists a prime $\Pgotic$ in $\Q(j)$ lying over $p$ such that $\Pgotic$ divides $j$. 

We prove Claim \ref{claim} only for the prime $p=3$, the other cases being analogous. Fix $j$ as in the statement of the claim and let $E/L$ be an elliptic curve defined over a number field $L$ with complex multiplication by the order $\Ocal_j$. Since $E$ has potential good reduction at every prime of $L$ (Theorem 6.4, Chapter II in \cite{Silverman2}) we may assume that $E$ has good reduction at every prime $\Pgotic$ of $L$ lying over 3. Fix such a prime. Since 3 ramifies in $\Q(\sqrt{-3})$, the reduced elliptic curve $\widetilde{E}= E$ mod $\Pgotic$ is supersingular by Theorem \ref{Lang}. However by Theorem V, 4.1 (c) in \cite{Silverman} there is only one isomorphism class of supersingular elliptic curves over $\overline{\F}_3$, one representative being given by
$$E_0: y^2=x^3+x.$$
Now we have $j(E_0)=1728$ which is divisible by 3 and then
$$\widetilde{j(E)}=j(\widetilde{E})=j(E_0)=0$$
where $\sim$ denotes the reduction modulo $\Pgotic$. We deduce that $\Pgotic$ divides $j$ and by the discussion above this concludes the proof. 

If $p=2$ or $p=5$ the argument can be repeated; in these cases the only supersingular elliptic curves over $\overline{\F}_p$ are given by
$$E_1:y^2+y=x^3$$
if $p=2$ and by
$$E_2: y^2=x^3+1$$
if $p=5$. In both cases the $j$-invariant is zero. This concludes the proof of Theorem \ref{main theorem}.
\end{proof}

In Table \ref{Tabella} we have collected the factorizations of $N_{\Q(j)/\Q}(j)$ for all the singular moduli $j$ of discriminant $-3f^2$ with $f=1,...,50$. The table illustrates, among other phenomena to be discussed later, the statement of Claim \ref{claim}. 

\section{SOME REMARKS ON SINGULAR S-UNITS} \label{Remarks}

The result given by Corollary \ref{main corollary} is a special effective case of Theorem 1.1 for $S$-units. For $S$ taken to be the infinite set of primes congruent to $1$ mod $3$, the emptiness of the set of singular $S$-units is proved. On the other hand, for different collections of primes $S$ the set of singular $S$-units in general can be non-empty. For instance we already showed in the introduction that this is the case with the set $S=\{2,3\}$. However, since in general singular moduli are  ``highly factorable" algebraic integers, one would expect that in general the set of singular $S$-units is finite \textit{for every finite set of primes} $S$. \par
As we mentioned in the introduction Herrero, Menares and Rivera-Letelier announced a proof of the fact that, for every finite set of primes $S$, there are at most finitely many singular $S$-units. The first part of their work \cite{HMR} (the only one accessible at the time of writing) does not contain the details of the proof. We give here a much weaker statement than the one mentioned above, which however provides evidence for the stronger claim. Proposition \ref{Proposition} can be considered as a weak converse to Theorem \ref{Habbeger}. The main ingredient of the proof, besides Theorem 1.1, is the reduction theory for CM elliptic curves as described in Section 2.

\begin{prop} \label{Proposition}
Let $S=\{p_1,...,p_n\}$ be a finite set of rational primes. Then there exist infinitely many singular moduli of fundamental discriminant that are not $S$-units. 
\end{prop}

\begin{proof}
We know by Theorem \ref{Habbeger} that no singular modulus is a unit. In particular there is always a rational prime that divides the norm of any singular modulus. If $j$ is a singular modulus of fundamental discriminant $-D<-3$ and $p$ divides $N_{\Q(j)/\Q}(j)$, then by Corollary \ref{Cox} the prime $p$ cannot split in $\Q(\sqrt{-D})$. The idea for the proof of the proposition is then to find infinitely many fundamental discriminants $-D$ such that all the primes in $S$ split in $\Q(\sqrt{-D})$. In this way the set of primes dividing the norm of any singular modulus of discriminant $-D$ (this set is nonempty by the above discussion) has trivial intersection with $S$. \\
Let $q$ be a prime number such that
\begin{itemize}
\item $q \equiv -1$ mod $p_i$ for every $p_i \in S$.
\item $q \equiv -1$ mod $8$.
\end{itemize}
We know that there are infinitely many primes satisfying these conditions by Dirichlet's theorem on primes in arithmetic progression and the Chinese reminder theorem. We claim that in $\Q(\sqrt{-q})$ all the primes of $S$ are split. First of all notice that $\disc \Q(\sqrt{-q})=-q$ because clearly $q$ is squarefree and $-q \equiv 1 \mod 4$ by assumption. To prove that every prime in $S$ splits in this field we compute the Kronecker symbols $(-q/p_i)$. We have two cases: 
\begin{itemize}
\item If $p_i=2$ for some $i$ then $\left(\frac{-q}{2} \right)=1$ because $-q \equiv 1 \mod 8$.
\item If $p_i >2$ then
$$\left(\frac{-q}{p_i} \right)=\left(\frac{-1}{p_i} \right) \left(\frac{q}{p_i} \right)= (-1)^{\frac{p_i -1}{2}} \cdot \left(\frac{q}{p_i} \right)=(-1)^{\frac{p_i -1}{2}} \cdot \left(\frac{-1}{p_i} \right)= 1 $$
because $(-1/p_i)=(-1)^{\frac{p_i -1}{2}}$ for every odd prime $p_i$.
\end{itemize}
Since the Kronecker symbols above are equal to 1, we deduce that all the primes in $S$ are split in $\Q(\sqrt{-q})$. This proves the proposition.
\end{proof}

\begin{remark}
Using the same strategy, one can actually prove that for every finite set $S$ of rational primes there exist a positive proportion of negative fundamental discriminants whose corresponding singular moduli are not $S$-units. Here is a sketch of the argument: as explained in the proof of Proposition \ref{Proposition}, it suffices to consider the set of fundamental discriminants $-D$ for which every prime in $S$ is split in $\Q(\sqrt{-D})$. The map $d \mapsto \Q(\sqrt{-d})$ gives a bijection between the set of squarefree positive integers and the set of imaginary quadratic fields. Under this bijection, the imaginary quadratic fields where every prime in $S$ splits correspond to the squarefree integers $d$ such that the equality of Kronecker symbols $(d/p)=(-1/p)=(-1)^{\frac{p-1}{2}}$ holds for all odd $p \in S$ and satisfying $d\equiv 7 \text{ mod } 8$ if $2 \in S$. We are then reduced to find the proportion $\delta_S$ of all the positive squarefree numbers $d$ satisfying these congruence conditions. By the Chinese Reminder Theorem, this is equivalent to studying the asymptotic distribution of squarefree numbers in the residue classes mod $N:=4 \cdot \prod_{p\in S} p$. 

For $a,k \in \N$ we denote by $Q(x;a,k)$ the number of squarefree positive integers $d \leq x$ such that $d\equiv a \text{ mod } k$. The study of the asymptotic behaviour of the function $Q(x;a,k)$ dates back to Landau \cite{Landau} pp. 633-636. An equivalent formulation of his results is given in  \cite[Lemma 8]{Schwartz}, which in particular yields
\begin{equation*} 
Q(x;a,k) \thicksim \frac{6}{\pi^2} x \cdot \delta(a,k) \hspace{0.5cm} \text{ as } x \rightarrow \infty,
\end{equation*}
where
\begin{equation} \label{squarefree}
\delta(a,k):=\frac{1}{k} \prod_{p \mid k} \frac{1}{1-p^{-2}} \cdot \prod_{\substack{p\mid (a,k), \\ (p^2,k) \mid a}} \left(1-\frac{(p^2,k)}{p^2} \right).
\end{equation}

In the above formula $p$ always denotes a prime number and for every pair of integers $u,v \in \Z$ we have set $(u,v):=\gcd(u,v)$. Since the natural density of the set of squarefree positive integers is $6/\pi^2$ $($see \cite[Theorem 2.2 pag. 36]{Montgomery-Vaughan}$)$ the number $\delta(a,k)$ in \eqref{squarefree} represents the proportion of squarefree positive integers $d\equiv a \text{ mod } k$. Notice that for every $a \in \N$ the arithmetic function $\delta(a,\cdot)$ is multiplicative. Moreover, one has
\begin{align*}
\delta(a,p)=\frac{p}{p^2-1} \hspace{0.3cm} &\text{for all } p\neq 2 \text{ and } p\nmid a, \\
\delta(a,8)=\frac{1}{6} \hspace{0.3cm} &\text{for all } a \not \equiv 0,4 \text{ mod } 8
\end{align*}
and from this it is easy to deduce that $\delta_S=\prod_{p\in S} \delta_{S,p}$ with
\begin{equation*}
\delta_{S,p}=\frac{p}{2p+2} \hspace{0.2cm}\text{ if } p\neq 2, \hspace{0.5cm} \delta_{S,2}=\frac{1}{6}.
\end{equation*}
In particular $\delta_S >0$, as we wanted to show.
\end{remark}

\begin{remark}
If $S$ does not contain the primes $2,3$ or $5$, Proposition \ref{Proposition} could be proved by considering imaginary quadratic fields where these primes do not split. For instance, if $2 \not \in S$ we may consider the fundamental discriminants $-D \equiv 0 \mod 4$: then $2$ ramifies inside $\Q(\sqrt{-D})$ and by Theorem \ref{Lang} it is a prime of supersingular reduction for any elliptic curve with complex multiplication by the ring of integers in $\Q(\sqrt{-D})$. Since $0$ is the only supersingular $j$-invariant modulo 2, we deduce that singular moduli of discriminant $D$ cannot be $S$-units.
\end{remark}

One of the most remarkable aspects of Corollary \ref{main corollary} is that it proves the emptiness of the set of singular $S$-units for an \textit{infinite} set of primes $S$. A natural question that arises is whether there exist other infinite collections of primes $S$ for which one could prove at least the finiteness of the set of singular $S$-units. The next proposition gives a negative answer to this question when almost all the primes congruent to $2$ modulo $3$ are contained in $S$.

\begin{prop}
Let $S$ be a set of rational primes containing almost all primes congruent to $2$ mod $3$ (that is, containing all but finitely many primes congruent to $2$ mod $3$). Then there are infinitely many singular $S$-units.
\end{prop}

\begin{proof}
Let $\{l_1,\dots,l_n\}$ be the finite set of all primes $l_i \equiv 2$ mod $3$ not contained in $S$. As in the proof of Proposition \ref{Proposition} there are infinitely many negative fundamental discriminants $-D$ such that for every $i=1,...,n$ the prime $l_i$ splits in $\Q(\sqrt{-D})$. Then the singular moduli corresponding to these discriminants cannot be divided by any prime $p \not \in S$. This follows from Corollary \ref{Cox} and the fact that we are considering fundamental discriminants. Hence there are infinitely many singular $S$-units.
\end{proof}

\section{SINGULAR MODULI OF DISCRIMINANT $-3f^2$} \label{finale}

We have seen that the proof of Theorem \ref{main theorem} naturally leads to the study of singular moduli relative to orders $\Ogotic_D \subseteq \Q(\sqrt{-3})$ \textit{i.e}. of singular moduli of discriminant $-3f^2$, $f \in \N_{>0}$ being the conductor of the corresponding order. We have collected some factorizations for the absolute value of the norm of these singular moduli in Table \ref{Tabella}. By looking at the table one immediately notices a main difference between the singular moduli in this family and all other $j$-invariants. Indeed we know (and we showed in the proof of Theorem \ref{main theorem}) that for a singular modulus $j$ relative to an order $\Ogotic \not \subseteq \Q(\sqrt{-3})$ the primes dividing $N_{\Q(j)/\Q}(j)$ cannot be congruent to 1 modulo 3. The situation is different for singular moduli of discriminant $-3f^2$. For instance we have:
\begin{align*}
&|N_{\Q(j)/\Q}(j_{21})|= 2^{30} \cdot 3^9 \cdot 5^6 \cdot 7 \cdot 17^3 \\
&|N_{\Q(j)/\Q}(j_{39})|=2^{66} \cdot 3^{21} \cdot 5^{12} \cdot 11^6 \cdot 13 \cdot 23^3 \\
&|N_{\Q(j)/\Q}(j_{57})|=2^{93} \cdot 3^{27} \cdot 5^{18} \cdot 11^6 \cdot 19 \cdot 29^6 \cdot 41^3 \cdot 53^3
\end{align*}
where $j_D$ is any singular modulus of discriminant $-D$. We see that $7,13$ and $19$ are all primes congruent to $1$ modulo $3$ and they appear in the above factorizations. A closer inspection of the table suggests more: whenever the conductor $f=p^n$ is an odd prime power, the prime $p$ divides the norm of the corresponding $j$ invariants with order exactly 1. For instance for $f=3,9,27,81$ we have:
\begin{align*}
f=3, \ &&D=-3\cdot 3^2 \hspace{0.5cm} &|N_{\Q(j)/\Q}(j)|=2^{15} \cdot 3  \cdot 5^3 \\
f=9, \ &&D=-3\cdot 3^4 \hspace{0.5cm} &|N_{\Q(j)/\Q}(j)|= 2^{45}\cdot 3 \cdot 5^9 \cdot 11^3 \cdot 23^3 \\
f=27, \ &&D=-3\cdot 3^6 \hspace{0.5cm} &|N_{\Q(j)/\Q}(j)|= 2^{144} \cdot 3\cdot 5^{27} \cdot 11^{15} \cdot 17^9 \cdot 23^3 \cdot 29^6 \cdot 53^6 \\
f=81, \ &&D=-3\cdot 3^8 \hspace{0.5cm} &|N_{\Q(j)/\Q}(j)|= 2^{432} \cdot 3 \cdot 5^{81} \cdot 11^{30} \cdot 17^{27} \cdot 23^6 \cdot 29^{12} \cdot 41^6 \cdot 47^9 \cdot  53^{12} \\
 & & & 
\hspace{2cm} \cdot 59^6 \cdot 71^6 \cdot 131^6 \cdot 167^6 \cdot 179^3 \cdot 191^6 \cdot 227^3 \cdot  239^3
\end{align*}
while for $f=5, 25, 125$ we have 
\begin{align*}
f=5, \ &&D=-3\cdot 5^2 \hspace{0.5cm} &|N_{\Q(j)/\Q}(j)|=2^{30} \cdot 3^6 \cdot 5 \cdot 11^3 \\
f=25, \ &&D=-3\cdot 5^4 \hspace{0.5cm} &|N_{\Q(j)/\Q}(j)|= 2^{156} \cdot 3^{48} \cdot 5 \cdot 11^9 \cdot 17^6 \cdot 23^6 \cdot 47^6 \cdot 59^3 \cdot 71^3 \\
f=125, \ &&D=-3\cdot 5^6 \hspace{0.5cm} &|N_{\Q(j)/\Q}(j)|=2^{810} \cdot 3^{150} \cdot 5 \cdot 11^{54} \cdot 17^{48} \cdot 23^{24} \cdot 29^{30} \cdot 41^{18} \cdot 53^{12}   \\
& & & \hspace{2cm} \cdot 59^{12} \cdot 71^{18} \cdot 83^{12} \cdot 89^{12} \cdot 107^6 \cdot 113^6 \cdot 131^6 \cdot 167^6 \\
& & &  \hspace{2cm} \cdot 179^9 \cdot 227^6 \cdot 251^6 \cdot 263^6 \cdot 311^3 \cdot 347^6 \cdot 359^3 
\end{align*}
We want to remark that the results displayed above are peculiar of non-maximal orders and can never be spotted for singular moduli of fundamental discriminants. Indeed we have the following

\begin{prop}
Let $j$ be a singular modulus of fundamental discriminant $-D$ and let $p$ be either $2,3$ or $5$. If $p$ divides $N_{\Q(j)/\Q}(j)$ then $p^2$ also divides $N_{\Q(j)/\Q}(j)$.
\end{prop}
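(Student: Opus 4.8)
The plan is to exploit the same mechanism that drove the proof of Theorem \ref{main theorem}: for $p\in\{2,3,5\}$ the unique supersingular $j$-invariant in characteristic $p$ is $0$, and $0$ is itself a singular modulus, namely the one attached to the maximal order $\Z[\zeta_3]$ of $\Q(\sqrt{-3})$ (of discriminant $-3$). First I would record that $p\mid N_{\Q(j)/\Q}(j)$ forces, via Theorem \ref{Lang} (after passing to a field of good reduction, exactly as in the proof of Theorem \ref{main theorem}), that $p$ does not split completely in $K=\Q(\sqrt{-D})$. But then Theorem \ref{Lang} applies to \emph{every} curve with CM by $\OO_K$, so every Galois conjugate $j_i$ of $j$ reduces to the supersingular value $0$ at every prime above $p$. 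Hence all primes $\Pgotic$ of $L=\Q(j)$ above $p$ divide $j$, and $v_p(N_{\Q(j)/\Q}(j))=\sum_{\Pgotic\mid p} f(\Pgotic\mid p)\,v_{\Pgotic}(j)$. The task is to show this sum is at least $2$.

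The clean way to finish is to read $N_{\Q(j)/\Q}(j)=\pm\prod_{[\agotic]}\bigl(j(\agotic)-0\bigr)$ as a product of differences of singular moduli, one family of discriminant $-D$ and the single one ($=0$) of discriminant $-3$, and to feed this into the factorization formulas of Gross--Zagier (\cite{Gross}) or, in the non-coprime case, of Lauter--Viray (\cite{Viray}). When $3\nmid D$ the discriminants $-D$ and $-3$ are coprime and fundamental, so Gross--Zagier applies directly: since the unit counts are $w_{-3}=6$ and $w_{-D}=2$ (for $-D<-4$), the normalized quantity $J(-D,-3)=N_{\Q(j)/\Q}(j)^{4/(w_{-D}w_{-3})}=N_{\Q(j)/\Q}(j)^{1/3}$ is a rational integer. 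Thus $N_{\Q(j)/\Q}(j)$ is, up to sign, a perfect cube, so every prime dividing it divides it to a multiple of $3$; in particular $p\mid N\Rightarrow p^2\mid N$. The tiny discriminants $-3$ (where $j=0$) and $-4$ (where $j=1728=12^3$ is already a cube) are disposed of by inspection.

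The main obstacle is the case $3\mid D$, where $-D$ and $-3$ are no longer coprime and classical Gross--Zagier does not apply. Here I would invoke the Lauter--Viray formula and track the exponent of $p$ at the supersingular value $0$: the six units of $\Z[\zeta_3]$ contribute the factor $w_{-3}/2=3$ to each local term, which should keep $v_p(N_{\Q(j)/\Q}(j))$ a multiple of $3$ (the numerical data confirm this, e.g. $|N|=495^3$ for $-D=-15$). The delicate point is precisely this bookkeeping when $p$ divides $D$ or ramifies, i.e.\ checking that the $w_{-3}/2=3$ symmetry survives in the non-coprime regime. Should a full cube fail to drop out there, the weaker estimate from the first paragraph still helps: a short comparison in the tower $\Q\subseteq L\subseteq H=LK$ (with $H$ the Hilbert class field of $K$) shows $\sum_{\Pgotic\mid p} f(\Pgotic\mid p)\ge 2$ unless $p$ is totally ramified in $L$ with residue degree $1$, a situation that forces $h_{-D}\le 2$ and thus leaves only a finite list of small discriminants to settle by direct computation.
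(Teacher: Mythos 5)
Your proposal is correct, and the branch of it that actually closes the proof is essentially the paper's own argument. Your ``fallback'' --- every prime $\Pgotic\mid p$ of $L=\Q(j)$ divides $j$ (since all conjugates of $j$ reduce to the unique supersingular invariant $0$), so $v_p(N_{\Q(j)/\Q}(j))\ge\sum_{\Pgotic\mid p}f(\Pgotic\mid p)$, and this sum equals $1$ only when $p$ is totally ramified in $L$ with residue degree $1$, which is impossible for $h_D\ge 3$ because $L$ lies in the Hilbert class field $\Q(\sqrt{-D},j)$, unramified over $\Q(\sqrt{-D})$, where the ramification index over $p$ is at most $2$ --- is exactly the paper's proof, which phrases the same dichotomy via $H_D(x)\equiv x^{h_D}\bmod p$: if $p^2\nmid N$ then $H_D$ is Eisenstein at $p$, forcing total ramification and contradicting unramifiedness of the Hilbert class field; the finitely many $h_D\le 2$ are checked by machine, as you suggest. (The paper derives non-splitting of $p$ from Theorem \ref{Cox} rather than the if-and-only-if of Theorem \ref{Lang}; both are valid, and in fact your tower comparison does not even need non-splitting, since $e(\pgotic\mid p)\le 2$ already holds in any quadratic field.) Your ``clean way'' is genuinely different where it applies: for $3\nmid D$, the Gross--Zagier normalization in \cite{Gross} (equivalently, Weber's integrality of $\gamma_2=j^{1/3}$ in the ring class field) makes $N_{\Q(j)/\Q}(j)$ a cube up to sign, buying the stronger conclusion $3\mid v_p(N_{\Q(j)/\Q}(j))$, hence $p^3\mid N_{\Q(j)/\Q}(j)$, and for \emph{every} prime, not just $p\in\{2,3,5\}$. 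But as written, your treatment of $3\mid D$ via \cite{Viray} is a hope, not a proof: the claim that the ``$w_{-3}/2=3$ symmetry survives'' is precisely the delicate non-coprime bookkeeping, and the paper's own Table 1 (non-fundamental discriminants $-3f^2$, e.g.\ $v_2=4$ for $f=2^n$) shows that such cube-symmetry does not persist automatically once the discriminants interact at small primes. Since the fallback covers all fundamental $D$ uniformly, it should be promoted to the main argument, with the Gross--Zagier cube statement kept as a sharper remark in the coprime case.
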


\begin{proof}
One easily verifies the statement (for instance using SAGE \cite{Sage}) for all the fundamental discriminants $-D$ of class number $h_D\leq 2$. Hence we may suppose that $h_D>2$. 

Since $p$ divides $N_{\Q(j)/\Q}(j)$ there must exist a prime ideal $\pgotic$ of $\Q(j)$  lying over $p$ such that $\pgotic \mid j$. In particular $j \equiv 0 \mod \pgotic$ and since 0 is a singular modulus we can apply Theorem \ref{Cox} to deduce that $p$ does not split in $\Q(\sqrt{-D})$. Let now $E/L$ be an elliptic curve with complex multiplication whose $j$-invariant is a singular modulus of discriminant $-D$. Here we can assume that $L$ is a number field where $E$ has good reduction at all primes lying over $p$. Fix such a prime $\Pgotic$: since $p$ does not split in $\Q(\sqrt{-D})$, the reduction $\tilde{E}= E \mod \Pgotic$ is a supersingular elliptic curve by Theorem \ref{Lang}. But now we know by \cite[Chapter V, Theorem 4.1]{Silverman} that the only supersingular $j$-invariant modulo $p$ is 0. We deduce that all the singular moduli of discriminant $-D$ must reduce to 0 modulo $p$. In other words we have
$$H_D(x) \equiv x^{h_D} \mod p$$
where $H_D(x)$ is the Hilbert class polynomial of discriminant $-D$ and $h_D$ is the class number of the unique order of discriminant $-D$. 

Suppose now by contradiction that $p^2$ does not divide $N_{\Q(j)/\Q}(j)$. Then, by what we showed above, the Hilbert class polynomial $H_D(x)$ must be Eisenstein at $p$. Since $H_D(x)$ is the minimal polynomial of $j$, we deduce that $p$ has to be totally ramified in $\Q(j)$. Now we look at the field $\Q(\sqrt{-D}, j)$: since $[\Q(\sqrt{-D}, j):\Q(\sqrt{-D})]=[\Q(j):\Q]=h_D \geq 3$ there exists a prime of $\Q(\sqrt{-D})$ lying over $p$ that ramifies in $\Q(\sqrt{-D}, j)$. This contradicts the fact that $\Q(\sqrt{-D}, j)$ is the Hilbert class field of $\Q(\sqrt{-D})$. Hence $p^2$ divides $N_{\Q(j)/\Q}(j)$ and the proposition is proved.
\end{proof}

\begin{remark}
The same argument works if we consider discriminants of orders whose conductor is not divided by $2,3$ and $5$. Indeed in this case the associated ring class field is unramified at these primes.
\end{remark}

If the conductor $f$ is a power of $2$ the regularity in the factorizations appears a bit different from the previous cases. For instance for $f=2,4,8,16,32$ we have 
\begin{align*}
&f=2, \ &&D=-3\cdot 2^2 \hspace{0.5cm} &|N_{\Q(j)/\Q}(j)|=& 2^4 \cdot 3^3 \cdot 5^3 \\
&f=4, \ &&D=-3\cdot 2^4 \hspace{0.5cm} &|N_{\Q(j)/\Q}(j)|= &2^4 \cdot 3^9 \cdot 5^6 \cdot 11^3 \\
&f=8, \ &&D=-3\cdot 2^6 \hspace{0.5cm} &|N_{\Q(j)/\Q}(j)|=& 2^4 \cdot 3^{12} \cdot 5^{12} \cdot 11^6 \cdot 17^6 \cdot 23^3 \\
&f=16, \ &&D=-3\cdot 2^8 \hspace{0.5cm} &|N_{\Q(j)/\Q}(j)|=& 2^4 \cdot 3^{42} \cdot 5^{24} \cdot 11^6 \cdot 17^6 \cdot 23^3 \cdot 29^6 \cdot 41^6 \cdot 47^3 \\
&f=32, \ &&D=-3\cdot 2^{10} \hspace{0.5cm} &|N_{\Q(j)/\Q}(j)|=& 2^4 \cdot 3^{48} \cdot 5^{48} \cdot 11^{24} \cdot 17^6 \cdot 23^{12} \cdot 29^{12} \cdot 47^9 \cdot 53^6 \cdot \\
 & & & & & 59^6 \cdot 71^3 \cdot 83^6 \cdot 89^6
\end{align*}

All the data above suggest the following

\begin{conj} \label{conjecture}
For a prime number $p$ let $v_p: \Q^* \to \Z$ be the usual $p$-adic valuation and for a discriminant $D=-3f^2$ let $j_D$ be any singular modulus relative to that discriminant. Then:
\begin{itemize}
\item if $f=p^n$ with $p$ odd prime, $v_p(N_{\Q(j)/\Q}(j_D))=1$.
\item if $f=2^n$, $v_2(N_{\Q(j)/\Q}(j_D))=4$.
\end{itemize}
\end{conj}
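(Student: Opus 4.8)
The plan is to reduce the statement to the computation of a single $p$-adic valuation and then to evaluate it by local intersection theory. Since $j=0$ is the unique singular modulus of discriminant $-3$ and $H_{-3}(x)=x$, for $D=3f^{2}$ one has $N_{\Q(j)/\Q}(j_D)=\pm H_{D}(0)=\pm\operatorname{Res}(H_{-3},H_{D})$, so that $v_p\bigl(N_{\Q(j)/\Q}(j_D)\bigr)=\sum_{\Pgotic\mid p}f_{\Pgotic}\,v_{\Pgotic}(j_D)$, the sum running over the primes $\Pgotic$ of $\Q(j_D)$ above $p$, with $f_{\Pgotic}$ the residue degree. By Theorem \ref{Lang} a term is nonzero exactly when the CM curve $E_D$ of conductor $p^{n}$ reduces modulo $\Pgotic$ to the curve of invariant $0$; thus the valuation is an intersection number counting, with multiplicity, the conductor-$p^{n}$ CM points that meet the point $j=0$ in the fibre over $p$. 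First I would record that this reduces the two assertions to showing that this intersection number equals $1$ (resp. $4$) for every $n$, and I would anchor the small cases directly, in particular the class-number-one conductors $f=2,3,5$ where $j_D$ is rational.

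The engine for the multiplicities is the cyclic $p^{n}$-isogeny $\C/\Ocal_{-3p^{2n}}\to\C/\Z[\omega]$ arising from $\Ocal_{-3p^{2n}}=\Z+p^{n}\Z[\omega]\subset\Z[\omega]$, which places $(j_D,0)$ on $X_{0}(p^{n})$. I would then separate cases according to the splitting of $p$ in $\Q(\sqrt{-3})$. When $p=3$ (ramified) or $p\equiv 2\bmod 3$ (inert) the reduction is supersingular, and $v_{\Pgotic}(j_D)$ is precisely the datum governed by Gross's theory of canonical and quasi-canonical liftings for the order $\Z+p^{n}\Ocal_K$: I expect only the quasi-canonical branch terminating at the canonical lift of the $j=0$ curve to contribute, which would force a single prime to occur and explain why the answer is \emph{independent of $n$} even as the class number grows. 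When $p\equiv 1\bmod 3$ (split) the reduction is ordinary, and I would instead use the Serre--Tate/Lubin--Tate description of the reduction of ordinary CM points, again isolating the unique branch reaching $j=0$. The clean constant values should emerge from this local computation rather than from a naive count.

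The hard part will be the degeneracy $(-3)\cdot(-3f^{2})=(3f)^{2}$: the product of the two discriminants is a perfect square, which is exactly the hypothesis excluded from the Gross--Zagier factorization and from its generalization by Lauter--Viray in \cite{Viray}. Hence the formula for $v_p\bigl(\operatorname{Res}(H_{d_1},H_{d_2})\bigr)$ cannot be quoted directly and must be re-derived in this boundary case, where the two CM orders lie in the \emph{same} imaginary quadratic field; I would either carry the quasi-canonical lifting computation through unconditionally, or extend the Lauter--Viray counting of optimal embeddings to permit $d_1 d_2=\square$, using the fact that $E_D$ and $E_0$ are non-isomorphic to keep the counts finite. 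The second genuine difficulty is the prime $2$: the value $4$ is invisible to the generic count and is presumably tied to the extra automorphisms and the wild ramification of the $j=0$ curve in characteristic $2$ (the supersingular curve $E_1$), so I expect it to require a separate local intersection computation that I do not anticipate following from the odd-prime argument by a formal substitution.
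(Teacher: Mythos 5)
Your central premise is factually wrong, and it derails the whole plan. You assert that the degeneracy $d_1d_2=(-3)(-3f^2)=(3f)^2$ being a perfect square is ``exactly the hypothesis excluded'' from the Gross--Zagier formula \emph{and from its generalization by Lauter--Viray}, so that the local multiplicities must be re-derived from scratch via quasi-canonical liftings and Serre--Tate theory. But the paper's proof of the proved part of this conjecture (Theorem \ref{prime powers}: $f=p^n$ with $n$ \emph{even}, $p\neq 3$) consists precisely in quoting Lauter--Viray in this boundary case: Theorem 1.1 of \cite{Viray} is applied with $d_1=-3$, $f_1=1$, $d_2=-3p^{2n}$, with the product taken over $x^2\leq 9p^{2n}$ \emph{including} the degenerate term $F(0)$ at $x=\pm 3p^n$, and the final part of their Theorem 1.5 (applicable because $f_1=1$) evaluates $v_p(F(0))=\frac{2}{6}\#\Pic(\Ocal_{d_1})=\frac{1}{3}$, which after undoing the exponent $2/3$ accounts for the single factor of $p$. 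So the machinery you propose to rebuild (Gross's canonical and quasi-canonical liftings in the supersingular case, Serre--Tate coordinates in the ordinary case) is exactly what \cite{Viray} has already packaged for same-field pairs of discriminants; nothing needs to be extended to permit $d_1d_2=\square$. Your opening reduction (resultant of $H_{-3}=x$ and $H_D$, and the supersingular/ordinary dichotomy via Theorem \ref{Lang}) does match the paper's starting point $\prod(j_1-j_2)=\pm N_{\Q(j)/\Q}(j)$, but everything after it replaces a quotable theorem with a research program.

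Moreover the proposal is only a program at the decisive points, and it mislocates the true difficulty. Statements such as ``I expect only the quasi-canonical branch \dots to contribute'' and ``the clean constant values should emerge'' are precisely what must be proved, and if they held uniformly in $n$ they would settle the full conjecture --- a red flag, since the statement is a conjecture and the paper can only prove the even-exponent cases. The actual engine is a parity argument: writing $m=\frac{9p^{2n}-x^2}{4}=p^{2r}A$ with $p\nmid A$, Theorems 1.5 and 7.12 of \cite{Viray} force $v_p(F(m))=0$ because $v_p\bigl(m/p^{1+n}\bigr)=2r-1-n$ is odd when $n$ is even; for odd $n$, and for $p=3$, this parity obstruction disappears, and one then needs congruence information modulo $3$ on the primes dividing $\frac{9p^{2(n-r)}-k^2}{4}$ --- the obstruction the paper explicitly names as the reason the remaining cases are open. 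Your plan never isolates this, so it would stall there without noticing, in both your supersingular and ordinary branches. Finally, for $p=2$ the value $4$ does not come from wild ramification or extra automorphisms of the $j=0$ curve: in the paper it arises because, besides $F(0)=2^{1/3}$, exactly one further term of the same product is a power of $2$, namely $x=\pm 2^n$, $m=2^{2n+1}$, where $F(m)=2$, giving $v_2\bigl(\vert N_{\Q(j)/\Q}(j)\vert^{2/3}\bigr)=\frac{2}{3}+2$ and hence $v_2=4$. Your instinct that $p=2$ requires a separate case agrees with the paper, but the mechanism is one extra factor in the Lauter--Viray product, not a different local theory.
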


We are able to prove part of Conjecture \ref{conjecture} in the following

\begin{thm}\label{prime powers}
Let $j$ be a singular modulus of discriminant $D=-3f^2$, i.e. a singular modulus relative to an order $\Ocal_j \subseteq \Q(\sqrt{-3})$ of conductor $f$. Assume that $f=p^n$ is a perfect prime power with $n$ a positive even natural number. 
\begin{itemize}
\item If $p \neq 3$ is odd then $p$ divides exactly $N_{\Q(j)/\Q}(j)$.
\item If $p=2$ then $2^4$ divides exactly $N_{\Q(j)/\Q}(j)$.
\end{itemize}
\end{thm}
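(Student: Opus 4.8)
The plan is to reduce the statement to a local computation of valuations and then to separate the split and non-split behaviour of $p$ in $K=\Q(\sqrt{-3})$. Writing $L=\Q(j)$ and $h_D=[L:\Q]$, and fixing an embedding $\Qbar\hookrightarrow\overline{\Q_p}$ with $v_p$ normalised by $v_p(p)=1$, one has
$$v_p\!\left(N_{L/\Q}(j)\right)=\sum_{\iota\colon L\hookrightarrow\overline{\Q_p}}v_p(\iota(j))=\sum_{i=1}^{h_D}v_p(j_i),$$
where $j_1,\dots,j_{h_D}$ are all the singular moduli of discriminant $-3f^2$. Thus the theorem asks that this sum of (generally fractional) local valuations be exactly $1$, resp. $4$ for $p=2$. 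A term $v_p(j_i)$ is positive exactly when the curve $E_i$ with $j(E_i)=j_i$ reduces, at the chosen place over $p$, to the curve of $j$-invariant $0$; and $0$ is precisely the singular modulus of the maximal order $\Ocal_K$. By Deuring's theorem (Theorem \ref{Lang}) the reduction is ordinary for $p\equiv1\pmod3$ and supersingular for $p\equiv2\pmod3$ and for $p=2$.

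First I would dispose of the split case $p\equiv1\pmod3$, which I expect to work uniformly in $n$. The endomorphism ring of an ordinary reduction is an order of conductor prime to $p$; since the prime-to-$p$ part of $p^n$ is trivial, every $E_i$ reduces to the curve $\tilde E_0$ with $\mathrm{End}(\tilde E_0)=\Ocal_K$ and $j(\tilde E_0)=0$, so all $h_D$ terms contribute. I would then invoke Serre--Tate theory: the deformation space of $\tilde E_0$ over $W(\overline{\F}_p)$ is a formal disc, its conductor-$p^n$ CM lifts correspond to the primitive $p^n$-th roots of unity $q$ (Serre--Tate parameter), and in a linearising coordinate $t=q-1$ one has $v_p(t)=1/(p^{n-1}(p-1))$. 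The unit group $\Ocal_K^\times=\mu_6$ acts through $\mu_3$ (a primitive cube root of unity lies in $\Z_p^\times$ precisely because $p\equiv1\pmod3$), and $j$, being invariant and vanishing to order $3$ at the fixed point — the ramification of the $j$-line at the elliptic point $j=0$ — satisfies $v_p(j_i)=3\,v_p(t)$ on each isomorphism class. Since the $\mu_3$-orbits of primitive $p^n$-th roots of unity are exactly the $h_D=p^{n-1}(p-1)/3$ classes, summing yields $v_p(N_{L/\Q}(j))=h_D\cdot 3/(p^{n-1}(p-1))=1$.

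The non-split cases $p\equiv2\pmod3$ and $p=2$ are where I expect the genuine difficulty and where the evenness of $n$ should be used. Here $\tilde E_0$ is the supersingular curve with $j=0$, whose endomorphism ring is a maximal order $\Ocal_B\supset\Z[\zeta_3]$ in the quaternion algebra $B_{p,\infty}$, and for larger $p$ there are supersingular invariants other than $0$, so one must at the same time decide which conjugates reduce to $j=0$ and with what multiplicity. My approach would be the Gross--Zagier local intersection computation in the quaternionic form of \cite{Viray}: the contribution $\sum_i v_p(j_i)$ of the lifts reducing to $\tilde E_0$ is a weighted count of optimal embeddings of $\Ocal_{p^n}=\Z+p^n\Ocal_K$ into $\Ocal_B$, governed locally at $p$ by embeddings of $\Z_p+p^n\Ocal_{K,p}$ into the maximal order of the division quaternion algebra over $\Q_p$. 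As $p$ ramifies in that algebra, these local embedding numbers depend on the parity of $n$; I expect $n$ even to be exactly the regime in which the count collapses to $1$, while the automorphism group of order $24$ at $j=0$ in characteristic $2$ accounts for the value $2^4$.

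The main obstacle is the last point: upgrading the supersingular multiplicity at $j=0$ from positive to exact. This needs the precise Gross--Keating/Lauter--Viray local densities, a clean explanation of why the parity of $n$ is forced, and the bookkeeping that isolates $j=0$ from the other supersingular classes. Should the quaternionic count prove too delicate, a fallback is to repeat the deformation argument of the split case on the Lubin--Tate space of $\tilde E_0$, computing the order of vanishing of $j$ along each conductor-$p^n$ CM lift directly; the parity of $n$ would then surface as the ramification of that CM cycle in the supersingular formal neighbourhood.
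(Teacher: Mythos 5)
The completed portion of your argument covers only the split primes $p\equiv 1\pmod{3}$. The first bullet of Theorem \ref{prime powers} also contains the inert primes $p\equiv 2\pmod{3}$, and the second bullet concerns $p=2$, which is likewise inert in $\Q(\sqrt{-3})$; for all of these you offer a strategy, not a proof, and you yourself flag the decisive step --- upgrading the supersingular multiplicity at $j=0$ from positive to exact, and explaining why the parity of $n$ is forced --- as an unresolved obstacle. But that step is where the entire content of the theorem sits: positivity-type information yields only divisibility, while the statement asserts \emph{exact} divisibility ($v_p=1$, resp.\ $v_2=4$). A telling symptom is that the hypothesis that $n$ is even is never used in the part of your argument that you actually carry out (your ordinary-case computation is uniform in $n$, consistent with Conjecture \ref{conjecture}); the evenness must do its work precisely in the supersingular computation you leave open. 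The ordinary case itself looks sound, granting the standard facts you invoke (conductor-$p^n$ lifts have primitive $p^n$-th roots of unity as Serre--Tate parameters, $\Ocal_K^{\times}$ acts through a cube root of unity with free orbits, and $j$ vanishes to exact order $3$ at the elliptic point), and the bookkeeping $h_D\cdot 3/\varphi(p^n)=1$ is correct.

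For comparison, the paper executes exactly the Lauter--Viray computation you name as your intended tool for the hard case, and it does so uniformly, never separating split from inert. With $d_1=-3$, $d_2=-3p^{2n}$ it writes $\vert N_{\Q(j)/\Q}(j)\vert^{2/3}=\prod_x F\bigl(\frac{9p^{2n}-x^2}{4}\bigr)$ by Theorem 1.1 of \cite{Viray}; the terms $x=\pm 3p^n$ contribute $v_p(F(0))=\frac{1}{3}$ each by Theorem 1.5 of \cite{Viray}; and for $m=\frac{9p^{2n}-x^2}{4}>0$ with $p\mid m$, writing $x=p^r k$ with $p\nmid k$, one has $v_p(F(m))=\rho(m)\,\Ugotic(m/p^{1+n})$, which vanishes whenever $v_p(m/p^{1+n})=2r-1-n$ is odd (Theorem 7.12 of \cite{Viray}) --- this one-line parity observation is the whole role of ``$n$ even'' and disposes of every odd $p\neq 3$, split or inert, at once. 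For $p=2$ the same sieve leaves exactly two extra terms, $x=\pm 2^n$, i.e.\ $m=2^{2n+1}$, each with $v_2(F(m))=1$, giving $\vert N_{\Q(j)/\Q}(j)\vert^{2/3}=2^{2/3}\cdot 2^{2}\cdot A$ with $A$ odd, hence $v_2=4$; so the exponent $4$ arises from these two additional terms in the class-number-type sum, not directly from the order-$24$ automorphism group in characteristic $2$ as you conjectured. In short: your Serre--Tate argument is a genuinely different and attractive route for $p\equiv 1\pmod{3}$, but to prove either bullet of the theorem in full you would still have to carry out, for the inert primes and for $p=2$, the quaternionic local computation you only sketch --- which is precisely what the Lauter--Viray valuation formulas package for you.
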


\begin{proof}
The proof of Theorem \ref{prime powers} will rely on the fomulas proved by K. Lauter and B. Viray in \cite{Viray}. Following the same notation of their paper, set for $n$ positive even
$$d_1=-3, \ f_1=1, \ d_2=-3p^{2n},\ f_2=p^n$$
so that $j_2=j$ is a singular modulus of discriminant $d_2$ and $j_1=0$ is the only singular modulus of discriminant $d_1$. Then
$$\prod_{\substack{j_1, j_2 \\ \disc j_i=d_i}}(j_1 - j_2)=\pm N_{\Q(j)/\Q}(j)$$
since all the singular moduli of the same discriminant are conjugated. If now $w_i$ denotes the number of units in the order $\Ocal_{d_i}$ for $i=1,2$, then by our assumptions we have $w_1=6$ and $w_2=2$. By Theorem 1.1 in \cite{Viray} we get
\begin{equation} \label{equazione}
 \vert N_{\Q(j)/\Q}(j) \vert  ^{2/3} = \prod_{\substack{x^2 \leq 9p^{2n} \\ x^2 \equiv 9p^{2n} \ \text{mod} \ 4 }} F\left( \frac{9p^{2n}-x^2}{4} \right)
\end{equation}
where $F$ is a function that takes non-negative integers of the form $\frac{9p^{2n}-x^2}{4}$ to possibly fractional prime powers. The precise definition of the function $F(\cdot)$ is somewhat involved and not needed for the proof of the theorem. For  completeness of exposition, we decided however to incude it in the next paragraph. Our treatment follows closely the proof of \cite[Theorem 1.1]{Viray}, where the function $F(\cdot)$ is defined.

Let $L/\Q$ be the minimal finite field extension containing $\Q(\sqrt{-3})$ with the property that, for every rational prime $\ell$ and every singular modulus $\mathfrak{J}$ relative to the order $\Ocal_j$, there exist elliptic curves $E_0$ and $E_{\mathfrak{J}}$ defined over the ring of integers $\mathcal{O}_L$ such that $j(E_0)=0$, $j(E_{\mathfrak{J}})=\mathfrak{J}$ and which have good reduction at every prime $\mu \subseteq L$ above $\ell$. Such an extension can always be found by \cite{Serre-Tate}, Sections 5 and 6. For a fixed prime $\mu \subseteq L$ let $L_\mu^{\text{unr}}$ be the maximal unramified extension of the $\mu$-completion of $L$ and denote by $A\subseteq L_\mu^{\text{unr}}$ its ring of integers. Then for every $n\in \N$ and every isomorphism $f \in \operatorname{Iso}_{A/\mu^n} (E_0 \text{ mod } \mu^n, E_\mathfrak{J} \text{ mod } \mu^n)$ between the reduced elliptic curves mod $\mu^n$, there is a canonical isomorphism of rings 
\begin{align*}
i_{f}: \End_{A/\mu^n} (E_0 \text{ mod } \mu^n)& \xrightarrow{\sim} \End_{A/\mu^n} (E_\mathfrak{J} \text{ mod } \mu^n) \\
g &\mapsto f \circ g \circ f^{-1}
\end{align*}

which allows to write
\begin{equation*}
\begin{tikzcd}
\Z[\frac{1+\sqrt{-3}}{2}] \cong \End_A (E_0) \arrow[r, hook] & \End_{A/\mu^n} (E_0 \text{ mod } \mu^n) \arrow[r,"\sim"',"i_f"] & \End_{A/\mu^n} (E_\mathfrak{J} \text{ mod } \mu^n)                 \\
                  &             & \Ocal_j \cong \End_A(E_\mathfrak{J}) \arrow[u, hook]
\end{tikzcd}
\end{equation*}
where the non-labelled inclusions are induced by the reductions mod $\mu^n$. Let $R_f$ be the order generated by the image of $\Z[\frac{1+\sqrt{-3}}{2}]$ and $\mathcal{O}_j$ in $\End_{A/\mu^n} (E_\mathfrak{J} \text{ mod } \mu^n)$, and denote by $D_f$ its discriminant. It is possible to prove that the discriminant $D_f$ of the order $R_f$ is of the form
$$D_f=\left(\frac{9p^{2n}-x^2}{4}\right)^2$$
for some $x\in \Z$ with $x^2\leq 9p^{2n}$ and $x^2 \equiv 9p^{2n} \text{ mod } 4$. Then for every prime ideal $\mu \subseteq L$ and every integer $m$ of the form $\frac{9p^{2n}-x^2}{4}$ we define
$$N_{m,\mu}:=\frac{1}{3C} \sum_{\mathfrak{J}} \sum_{n\geq 1} \# \{f \in \operatorname{Iso}_{A/\mu^n} (E_0 \text{ mod } \mu^n, E_\mathfrak{J} \text{ mod } \mu^n): D_f=m^2 \}$$
where the first sum is taken over all singular moduli $\mathfrak{J}$ relative to the order $\mathcal{O}_j$ and $C\in \Z$ is such that $C=1$ if $x=0$ and $C=2$ otherwise. We finally define 
$$F(m):=\prod_{\mu \subseteq L} \mu^{N_{m,\mu}}$$
where the product is taken over all the prime ideals $\mu \subseteq L$. One can prove that, if $F(m)$ is non-trivial, there exists a unique rational prime $\ell$ such that $F(m)$ is supported only at prime ideals above $\ell$. Since the conditions defining $F(m)$ are Galois invariant, one can consider $F(m)$ to be a fractional power of the prime $\ell$.

Identity (\ref{equazione}) now shows that in order to understand the factorization of $ N_{\Q(j)/\Q}(j)$ one should study the function $F\left( \frac{9p^{2n}-x^2}{4} \right)$ for different values of $x$. We begin by studying the case $x=\pm 3p^n$, \textit{i.e.}~the factorization of $F(0)$. We denote by $v_p(\cdot)$ the usual $p$-adic valuation. Then by  the final part of Theorem 1.5 in \cite{Viray}, since $f_1=1$ and $d_2=d_1 p^{2n}$ we have 
$$v_p(F(0))=\frac{2}{6} \# \Pic(\Ocal_{d_1})=\frac{1}{3}$$
because $\Z[\frac{1+\sqrt{-3}}{2}]$ is a principal ideal domain. Combining this with equation (\ref{equazione}) gives
\begin{equation}\label{equazione2}
\vert N_{\Q(j)/\Q}(j) \vert  ^{2/3} = p^{2/3}\cdot \prod_{\substack{x^2 < 9p^{2n} \\ x^2 \equiv 9p^{2n} \ \text{mod} \ 4 }} F\left( \frac{9p^{2n}-x^2}{4} \right).
\end{equation}
In what follows we will distinguish between the cases $p$ odd and $p=2$. In the first case we will have to prove that none of the factors appearing in the product on the right-hand side of equation (\ref{equazione2}) is a power of $p$. In the second case we shall prove that there are exactly two factors in the same product that are equal to 2. \\
\textbf{Case 1: $p\neq 3$ odd.} We are now supposing that $x\neq \pm 3p^n$, \textit{i.e.}~that $m=\frac{9p^{2n}-x^2}{4}>0$. By the final part of Theorem 1.1 in \cite{Viray} we can have $v_p(F(m)) \neq 0$ only if $p$ divides $m$. Hence we only have to study the values of $F(m)$ with $p\mid m$. By definition of $m$ this implies that $p$ divides $x$ and we can then write $x=p^r k$, $0<r\leq n$ (here we use the fact that $p$ is odd), $k$ coprime with $p$. Hence $m$ can be factored as
$$m=\frac{9p^{2n}-k^2 p^{2r}}{4}=p^{2r} A, \hspace{1cm} A=\frac{9p^{2(n-r)}-k^2}{4}.$$
Notice that $p$ does not divide $A$. \\
By Theorem 1.5 in \cite{Viray} (which we can apply since $f_1=1$) we have that
\begin{equation}\label{equation3}
v_p(F(m))=\rho(m) \Agotic\left(\frac{m}{p^{1+n}} \right)
\end{equation}
where $\rho(\cdot)$ and $\Agotic(\cdot)$ are two functions defined for every integer $m,N$ as follows:
$$\rho(m) = \begin{cases}
 0 & \mbox{if } (-3,-m)_3=-1 \\ 
 1 & \mbox{if } 3\nmid m \\
 2 & \mbox{otherwise}
 \end{cases}$$
 
\begin{align*}
\Agotic(N)=\# \left\{ \agotic \subseteq \Z \left[\frac{1+\sqrt{-3}}{2}\right] \text{ideals}: N(\agotic)=N \right\}.
\end{align*}
where $(\cdot, \cdot)_3$ denotes the usual Hilbert symbol at 3. By Proposition 7.12 in \cite{Viray} we have that the right-hand side of \eqref{equation3} is zero if either $\frac{m}{p^{1+n}}$ is not an integer or $p \nmid d_1$ and $v_p(\frac{m}{p^{1+n}})\equiv 1 \mod 2$ (cf. the function $\varepsilon$ therein described). But now $p\neq 3$ by assumption and we have
$$v_p\left(\frac{m}{p^{1+n}} \right)=v_p(p^{2r-1-n}A)=2r-1-n \equiv 1 \mod 2$$
since by hypothesis $n$ is even. Hence the right-hand side of equation (\ref{equation3}) is 0 for every $m\neq 0$ and this concludes the proof in this case. \\
\textbf{Case 2: $p=2$.} As in the previous case, we have that $v_2(F(m))$ can be nonzero only if $2$ divides $m$, and this leads us to consider integers $m$ of the form
\begin{equation} \label{forma di m}
m=\frac{2^{2n} 9 - 2^{2r} k^2}{4} >0
\end{equation}
 where $k$ is either 0 or coprime with 2. First we study what happens for $k=0$. In this case we have $m=2^{2n-2}9$ and as above
$$v_2(F(m))=\rho(m) \Agotic\left(\frac{m}{2^{1+n}} \right)$$
where the quantity on the right-hand side is again zero if either $\frac{m}{2^{1+n}}$ is not an integer or $v_2(\frac{m}{2^{1+n}}) \equiv 1 \mod 2$. But we see that $\frac{m}{2^{1+n}}=2^{n-3}9$ and since $n$ is even by assumption, we deduce that $v_2(F(m))=0$ in this case. Hence we may assume $k \neq 0$ and coprime with 2. Notice that \eqref{forma di m} implies $r\leq n+1$. We consider two cases.
\begin{itemize}
\item[(i)] Suppose that $r\leq n$. In this case we can write
$$m=\frac{2^{2r}(2^{2(n-r)}9-k^2)}{4}=2^{2r-2}(2^{2(n-r)}9-k^2).$$
As in the previous case we have 
$$v_2(F(m))=\rho(m) \Agotic\left(\frac{m}{2^{1+n}} \right)$$
and we need to study 
$$\frac{m}{2^{1+n}}=2^{2r-n-3}(2^{2(n-r)}9-k^2).$$
Notice now that, since $k$ is coprime with $2$, the quantity inside the parenthesis cannot be divided by 2 unless $n=r$ and $k\in \{\pm 1\}$. Suppose first that $n \neq r$: then
$$v_2\left( \frac{m}{2^{1+n}} \right)=2r-n-3 \equiv 1 \mod 2$$
since $n$ is even by assumption. Using Theorem 7.12 in \cite{Viray} we deduce that $v_2(F(m))=0$ in this case. \\
Suppose now that $n=r$ and $k=\pm 1$: under these hypotheses we have $m=2^{2n+1}$ and
$$ v_2\left( \frac{m}{2^{1+n}} \right)=v_2(2^n)=n \equiv 0 \mod 2$$
by our assumptions on $n$. To compute the value of this valuation we have to use the full strength of Theorem 7.12 in \cite{Viray}: using the same notation of that theorem we have 
$$v_2(F(m))= \varepsilon_2(2^n) \prod_{\substack{q \mid 2^n\\ q\neq 2}} (*)$$
where we see that the product on the right is empty, hence equal to 1, and by definition of $\varepsilon_2(\cdot)$ we have $ \varepsilon_2(2^n)=1$. Hence for $k=\pm 1$, we have $v_2(F(m))=1$. 

\item[(ii)] Suppose now that $r=n+1$ and $k=\pm 1$. In this case $m=2^{2n-2} 5$ and we see that $v_2\left(\frac{m}{2^{n+1}} \right)=v_2(2^{n-3}5)=n-3$ is odd. As before we conclude that $v_2(F(m))=0$ in this case.
\end{itemize}
To sum up, if $p=2$ the only integers $m$ of the form $m=\frac{9p^{2n}-x^2}{4}$ for which $F(m)$ is a power of $2$ are $m=0$ ($x=\pm 2^n 3$) and $m=2^{2n+1}$ ($x=\pm 2^{2n}$), in which cases we obtain
$$F(0)=2^{1/3}, \hspace{1cm} F(2^{2n+1})=2.$$
Combining these results with equation (\ref{equazione}) we get
$$\vert N_{\Q(j)/\Q}(j) \vert ^{2/3}=2^{2/3} \cdot 2^2 \cdot B$$
where $B$ is an integer coprime with 2. This concludes the proof.
\end{proof}

The main problem encountered in trying to generalize Theorem \ref{prime powers} to the cases $n$ odd or $p=3$ is that, in these cases, the use of Lauter-Viray formulas requires some knowledge on the prime factorization of integers of the form $\frac{9p^{2(n-r)}-k^2}{4}$ and, in particular, on some congruence conditions modulo $3$ satisfied by these primes.

\section{THE CASE $j-1728$} 
In this final section we prove a result analogous to Theorem \ref{main theorem} for the difference $j-1728$, with $j$ a singular modulus. In this case we have the following result.

\begin{thm} \label{j-1728}
Let $S$ be the set of primes congruent to $1$ modulo $4$ and let $j$ be a singular modulus. If the difference $j-1728$ is an $S$-unit, then it is a unit.
\end{thm}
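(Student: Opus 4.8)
The plan is to transcribe the proof of Theorem \ref{main theorem} essentially verbatim, with the singular modulus $1728 = j(\Z[i])$ playing the role that $0 = j(\Z[\tfrac{1+\sqrt{-3}}{2}])$ played there, and with the field $\Q(i)$ replacing $\Q(\sqrt{-3})$. The reason the substitution works is the classical fact, itself an instance of Theorem \ref{Lang} applied to $E\colon y^2 = x^3 + x$, that $1728$ is a supersingular $j$-invariant modulo $p$ precisely when $p$ does not split completely in $\Q(i)$, that is, when $p = 2$ (ramified) or $p \equiv 3 \pmod 4$ (inert). The point to keep in mind is that a rational prime $p \equiv 1 \pmod 4$ splits completely in $\Q(i)$, exactly as primes $\equiv 1 \pmod 3$ split completely in $\Q(\sqrt{-3})$.

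First I would dispose of the singular moduli $j$ whose CM field $K$ differs from $\Q(i)$. Fix such a $j$, put $L = \Q(j)$, and let $\Pgotic$ be a prime of $L$ above a rational prime $p \equiv 1 \pmod 4$. Since $1728$ is the $j$-invariant of the maximal order $\Z[i]$ of $\Q(i)$, since the two CM fields differ (so no hypothesis on conductors is needed), and since $j \neq 1728$, Theorem \ref{Cox} applies to the pair $(j, 1728)$: a congruence $j \equiv 1728 \pmod \Pgotic$ would force $p$ not to split completely in $\Q(i)$, contradicting $p \equiv 1 \pmod 4$. Hence no prime of $L$ above a rational prime $\equiv 1 \pmod 4$ divides $j - 1728$, so no prime $\equiv 1 \pmod 4$ divides $N_{\Q(j)/\Q}(j - 1728)$. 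If moreover $j - 1728$ is an $S$-unit, then every prime dividing its norm lies in $S$ and is therefore $\equiv 1 \pmod 4$; the two conditions together leave no prime dividing the norm, so $N_{\Q(j)/\Q}(j - 1728) = \pm 1$ and $j - 1728$ is a unit, as desired.

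Next I would treat the case $K = \Q(i)$, i.e.\ $j$ of discriminant $-4f^2$ with $f \geq 2$ (so that $j \neq 1728$). Here I claim that $2$ and $3$ both divide $N_{\Q(j)/\Q}(j - 1728)$, which shows at once that $j - 1728$ is not an $S$-unit (neither $2$ nor $3$ is $\equiv 1 \pmod 4$), so the implication to be proved holds vacuously. The argument is the Deuring-theoretic one used for Claim \ref{claim}: let $E/L$ have complex multiplication by $\Ocal_j$; enlarging $L$ if necessary, we may assume $E$ has good reduction at a prime $\Pgotic$ above $p \in \{2,3\}$. Since $2$ ramifies and $3$ is inert in $\Q(i)$, in neither case does $p$ split completely, so by Theorem \ref{Lang} the reduction $\tilde{E}$ is supersingular. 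For $p = 2$ and $p = 3$ there is a unique supersingular $j$-invariant over $\overline{\F}_p$, namely $0$, and $1728 \equiv 0 \pmod p$ for both primes; hence $\tilde{j} = 0 = \widetilde{1728}$, so $\Pgotic \mid (j - 1728)$ and therefore $p \mid N_{\Q(j)/\Q}(j - 1728)$. The same reasoning applies to $p = 7$, where $1728 \equiv 6$ is the unique supersingular invariant and $7 \equiv 3 \pmod 4$ is inert; any one of these primes already suffices.

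I do not expect a serious obstacle: the proof is a faithful analogue of that of Theorem \ref{main theorem}. The only point needing care is the bookkeeping in the second case, namely selecting primes $p \not\equiv 1 \pmod 4$ modulo which $1728$ reduces to the \emph{unique} supersingular invariant; this confines one to $p \in \{2,3,7\}$ (the primes equal to $2$ or $\equiv 3 \pmod 4$ that admit a single supersingular invariant), and verifying the congruences $1728 \equiv 0 \pmod 2$, $1728 \equiv 0 \pmod 3$ and $1728 \equiv 6 \pmod 7$ is routine. The only other routine ingredient is the descent of the divisibility $\Pgotic \mid (j - 1728)$ from $L$ down to $\Q(j)$: the prime of $\Q(j)$ below $\Pgotic$ then divides $j - 1728$, which is what links the reduction statement to the rational norm $N_{\Q(j)/\Q}(j - 1728)$.
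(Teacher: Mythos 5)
Your proposal is correct and follows essentially the same route as the paper: the same case split on whether the CM field is $\Q(i)$, with Theorem \ref{Cox} applied to the pair $(j,1728)$ handling fields other than $\Q(i)$, and the Deuring-theoretic supersingular-reduction argument (the paper's Claim \ref{claim2}, with the primes $2,3,7$ and the congruences $1728\equiv 0 \pmod{2}$, $1728 \equiv 0 \pmod 3$, $1728\equiv 6 \pmod 7$) handling orders in $\Q(i)$. The only cosmetic difference is that the paper writes out the check only for $p=7$ and declares the other primes analogous, whereas you sketch all three; no gap either way.
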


\begin{proof}
The argument is analogous to the one given in the proof of Theorem \ref{main theorem}, so we will omit the details. 

If the singular modulus $j$ is not relative to an order in $\Q(i)$, then an argument identical to the one given in the proof of Theorem \ref{main theorem} allows to conclude. For singular moduli corresponding to orders in $\Q(i)$ we have the following
\begin{claim} \label{claim2}
Let $j$ be a singular modulus relative to an order $\Ocal_j$ in $\Q(i)$. Then the primes $2,3$ and $7$ divide $N_{\Q(j)/\Q}(j-1728)$.
\end{claim}
The theorem follows from Claim \ref{claim2}. We prove the claim for $p=7$, the other cases being analogous.

Fix $j$ as in the statement of the claim and let $E/L$ be an elliptic curve defined over a number field $L$ with complex multiplication by the order $\Ocal_j$ and assume that $E$ has good reduction at every prime $\Pgotic$ of $L$ lying over 7. Fix such a prime: since 7 is inert in $\Q(i)$, by Theorem \ref{Lang} the reduced elliptic curve $\tilde{E}= E \mod \Pgotic$ is supersingular. However there is only one isomorphism class of supersingular elliptic curves over $\overline{\F}_7$, one representative being given by
$$E_0: y^2=x^3+x$$
with $j$-invariant $j(E_0)=1728$. Then
$$j \Mod \Pgotic=j(E_0)=1728.$$
We deduce that $\Pgotic$ divides $j-1728$ and this proves the claim.

\end{proof}

\begin{cor}\label{corollary 2}
Let $S$ be the set of rational primes congruent to 1 modulo 4. Then, for every singular modulus $j$, the difference $j-1728$ is not an $S$-unit.
\end{cor}

\begin{proof}
The result follows by combining Theorem \ref{j-1728} and Corollary 1.3 in \cite{Yingkun}.
\end{proof}

\newpage

\section*{APPENDIX: SOME NUMERICAL COMPUTATIONS}

In this appendix we collect in a table some numerical computations, obtained using SAGE (\cite{Sage}), concerning the norm factorizations for singular moduli of discriminant $-3f^2$. In the first column of the table we list the conductors $f$  of different orders of complex multiplication inside $\Q(\sqrt{-3})$; in the second column we compute, up to a sign, the norm factorizations of the corresponding singular moduli (since singular moduli relative to the same order form a Galois orbit in $\Qbar$, they all have the same norm). The factorizations are obtained simply by factoring the constant term in the Hilbert class polynomial of \samepage{discriminant $-3f^2$.} \\

\begin{center} 
\begin{longtable}{|C{0.05\textwidth}|C{0.90\textwidth}|}
\caption{Norm factorizations of singular moduli of discriminant $-3f^2$ for $f\in \{1,...,50\}$} \label{Tabella}\\
\hline
\textbf{f} & $|N_{\Q(j)/\Q }(j)|$ \Tstrut\Bstrut \\
\hline
\endfirsthead
\multicolumn{2}{C{0.24\textwidth}}%
{} \\
\hline
\textbf{f} & $|N_{\Q(j)/\Q }(j)|$  \Tstrut\Bstrut \\
\hline
\endhead
\hline \multicolumn{2}{r}{} \\
\endfoot
\hline
\captionsetup{belowskip=10pt}

\endlastfoot

1 & 0 \Tstrut \\
2& {$ 2^4 \cdot 3^3 \cdot 5^3 $ } \Tstrut \\
3 & {$2^{15} \cdot 3  \cdot 5^3$} \Tstrut \\
4 & {$2^4 \cdot 3^9 \cdot 5^6 \cdot 11^3$} \Tstrut \\
5 & {$2^{30} \cdot 3^6 \cdot 5 \cdot 11^3$} \Tstrut \\
6 & {$ 2^{12} \cdot 3^3 \cdot 5^9 \cdot 11^6 \cdot 17^3$} \Tstrut \\
7 & {$2^{30} \cdot 3^9 \cdot 5^6 \cdot 7 \cdot 17^3$} \Tstrut \\
8 & {$ 2^4 \cdot 3^{12} \cdot 5^{12} \cdot 11^6 \cdot 17^6 \cdot 23^3$} \Tstrut \\
9 & {$2^{45}\cdot 3 \cdot 5^9 \cdot 11^3 \cdot 23^3$} \Tstrut \\
10 & {$2^{24} \cdot 3^{30} \cdot 5^3 \cdot 11^6 \cdot 17^6\cdot 23^6 \cdot 29^3$} \Tstrut \Bstrut \\
11 & {$2^{63} \cdot 3^{12} \cdot 5^{12} \cdot 11 \cdot 17^3 \cdot 29^3$} \Tstrut  \\
12 & {$2^{12} \cdot 3^6 \cdot 5^{18} \cdot 11^6 \cdot 17^6 \cdot 23^6 \cdot 29^6$} \Tstrut \\
13 & {$2^{66} \cdot 3^{21} \cdot 5^{12} \cdot 11^6 \cdot 13 \cdot 23^3$} \Tstrut \\
14 & {$2^{24} \cdot 3^{18} \cdot 5^{18} \cdot 11^{12} \cdot 17^3 \cdot 23^6 \cdot 29^6 \cdot 41^3$} \Tstrut \\
15 & {$2^{96} \cdot 3^6 \cdot 5^3 \cdot 11^6 \cdot 17^6 \cdot 29^3 \cdot 41^3$} \Tstrut \\
16 & {$2^4 \cdot 3^{42} \cdot 5^{24} \cdot 11^6 \cdot 17^6 \cdot 23^3 \cdot 29^6 \cdot 41^6 \cdot 47^3$} \Tstrut \\
17 & {$2^{96} \cdot 3^{18} \cdot 5^{18} \cdot 11^6 \cdot 17 \cdot 23^6 \cdot 47^3$} \Tstrut \\
18 & {$ 2^{36}\cdot 3^3 \cdot 5^{27} \cdot 11^{12} \cdot 17^9 \cdot 23^6 \cdot 29^3 \cdot 41^6 \cdot 47^6 \cdot 53^3$} \Tstrut \\
19 & {$2^{93} \cdot 3^{27} \cdot 5^{18} \cdot 11^6 \cdot 19 \cdot 29^6 \cdot 41^3 \cdot 53^3$} \Tstrut \\
20 & {$2^{24} \cdot 3^{36} \cdot 5^6 \cdot 11^{15} \cdot 17^{12} \cdot 23^6 \cdot 29^6 \cdot 41^6 \cdot 47^6 \cdot 53^6 \cdot 59^3$} \Tstrut \\
21 & {$ 2^{96} \cdot 3^6 \cdot 5^{18} \cdot 11^{12} \cdot 17^3 \cdot 47^3 \cdot 59^3$} \Tstrut \\
22 & {$2^{48} \cdot 3^{60} \cdot 5^{36} \cdot 17^9 \cdot 23^6 \cdot 29^6 \cdot 41^3 \cdot 47^6 \cdot 53^6 \cdot 59^6$} \Tstrut \\ 
23 & {$2^{126} \cdot 3^{24} \cdot 5^{24} \cdot 11^9 \cdot 17^9 \cdot 23 \cdot 41^6 \cdot 53^3$} \Tstrut \\
24 & {$2^{12} \cdot 3^{12} \cdot 5^{36} \cdot 11^{12} \cdot 17^6 \cdot 23^{12} \cdot 29^6 \cdot 41^6 \cdot 47^3 \cdot 53^6 \cdot 59^6 \cdot 71^3$} \Tstrut \\
25 & {$2^{156} \cdot 3^{48} \cdot 5 \cdot 11^9 \cdot 17^6 \cdot 23^6 \cdot 47^6 \cdot 59^3 \cdot 71^3$} \Tstrut \\
26 & {$2^{48} \cdot  3^{36} \cdot 5^{36} \cdot 11^{12} \cdot 17^{12} \cdot 23^6 \cdot 29^9 \cdot 41^6 \cdot 47^6 \cdot 53^3 \cdot 59^6 \cdot 71^6$ } \Tstrut  \\
27 & {$2^{144} \cdot 3\cdot 5^{27} \cdot 11^{15} \cdot 17^9 \cdot 23^3 \cdot 29^6 \cdot 53^6$} \Tstrut \\
28 & {$2^{24} \cdot 3^{66} \cdot 5^{36} \cdot  11^{12} \cdot 17^6 \cdot 23^6 \cdot 41^6 \cdot 47^6 \cdot 53^6 \cdot 59^3 \cdot 71^6 \cdot 83^3 $} \Tstrut \\
29 & {$2^{150} \cdot 3^{30} \cdot 5^{30} \cdot 11^{12} \cdot 17^{12}\cdot  23^3 \cdot 29 \cdot 59^6  \cdot 71^3 \cdot 83^3$} \Tstrut \\
30 & {$2^{72}\cdot  3^{18}\cdot  5^9\cdot  11^{24} \cdot 17^{12}\cdot  23^{12} \cdot 29^9 \cdot 41^9 \cdot 47^6 \cdot 53^6 \cdot 59^6 \cdot 71^6 \cdot  83^6 \cdot 89^3$} \Tstrut \\
31 & {$2^{156} \cdot 3^{51} \cdot 5^{30} \cdot 11^{15} \cdot 17^9 \cdot 23^3 \cdot 29^3 \cdot 31 \cdot 41^6 \cdot 89^3$} \Tstrut \\
32 & {$2^4 \cdot 3^{48} \cdot 5^{48} \cdot 11^{24} \cdot 17^6 \cdot 23^{12} \cdot 29^{12} \cdot 47^9 \cdot 53^6 \cdot 59^6 \cdot 71^3 \cdot 83^6 \cdot 89^6$} \Tstrut\\
33 & {$2^{189} \cdot 3^{12} \cdot 5^{36} \cdot 17^9 \cdot 23^{12} \cdot 29^3 \cdot 47^6 \cdot 71^6 \cdot 83^3$} \Tstrut\\
34 & {$2^{72} \cdot 3^{96} \cdot 5^{54} \cdot 11^{24} \cdot 23^6 \cdot 29^6 \cdot 41^6 \cdot 53^9 \cdot 59^6 \cdot 71^6 \cdot 83^6 \cdot 89^6 \cdot 101^3$} \Tstrut\\
35 & {$ 2^{192} \cdot 3^{36} \cdot 5^6 \cdot 11^{12} \cdot 17^6 \cdot 23^{12} \cdot 29^6 \cdot 41^3 \cdot 53^6 \cdot 89^3 \cdot 101^3$} \Tstrut \\
36 & {$2^{36} \cdot 3^6 \cdot 5^{54} \cdot 11^{21} \cdot 17^{18} \cdot 23^6 \cdot 29^{12} \cdot 41^6 \cdot 47^6 \cdot 59^9 \cdot 71^6 \cdot 83^3 \cdot 89^6 \cdot 101^6 \cdot 107^3$} \Tstrut \\
37 & {$2^{186} \cdot 3^{63} \cdot 5^{36} \cdot 11^{15} \cdot 17^6 \cdot 23^6 \cdot 37 \cdot 47^3 \cdot 59^6 \cdot 83^6 \cdot 107^3$} \Tstrut \\
38 & {$2^{72} \cdot 3^{54} \cdot 5^{54} \cdot 11^{24} \cdot 17^{18} \cdot 23^{12} \cdot 29^6 \cdot 41^6 \cdot 47^6 \cdot 53^6 \cdot 71^6 \cdot 83^6 \cdot 89^3 \cdot 101^6 \cdot 107^6 \cdot 113^3$} \Tstrut \\
39 & {$2^{186} \cdot 3^{12} \cdot 5^{36} \cdot 11^{12} \cdot 17^{12} \cdot 23^6 \cdot 29^3 \cdot 41^6 \cdot 53^3 \cdot 89^6 \cdot 101^3 \cdot 113^3 $} \Tstrut \Bstrut \\
40 & {$ 2^{24} \cdot 3^{126} \cdot 5^{12} \cdot 11^{30} \cdot 17^{12} \cdot 23^6 \cdot 29^{12} \cdot 41^6 \cdot 47^6 \cdot 53^6 \cdot 59^6 \cdot 71^9 \cdot 83^6 \cdot 89^6 \cdot 101^6 \cdot 107^6 \cdot 113^6$} \\
41 & {$2^{228} \cdot 3^{42} \cdot 5^{42} \cdot 11^{12} \cdot 17^6 \cdot 23^9 \cdot 29^{12} \cdot 41 \cdot 47^6 \cdot 59^3 \cdot 71^6 \cdot 107^3$} \Tstrut \\
42 & {$2^{72} \cdot 3^{18} \cdot 5^{57} \cdot 11^{12} \cdot 17^{15} \cdot 23^{12} \cdot 29^{12} \cdot 41^3 \cdot 47^6 \cdot 53^6 \cdot 59^6 \cdot 83^6 \cdot 89^6 \cdot 101^3 \cdot 107^6 \cdot 113^6$} \Tstrut \\
43 & {$2^{222} \cdot 3^{75} \cdot 5^{45} \cdot 11^{12} \cdot 17^{12} \cdot 23^6 \cdot 29^9 \cdot 43 \cdot 53^6 \cdot 101^6 \cdot 113^3$} \Tstrut \\
44 & {$2^{48} \cdot 3^{72} \cdot 5^{78} \cdot 11^3 \cdot 17^{18} \cdot 23^{18} \cdot 29^6 \cdot 41^{18} \cdot 53^6 \cdot 59^6 \cdot 71^6 \cdot 83^9 \cdot 89^6 \cdot 101^6 \cdot 107^3 \cdot 113^6 \cdot 131^3 $}\Tstrut \\
45 & {$2^{294} \cdot 3^6 \cdot 5^9 \cdot 11^{21} \cdot 17^{18} \cdot 23^{12} \cdot 29^9 \cdot 41^3 \cdot 59^6 \cdot 71^3 \cdot 83^6 \cdot 107^6 \cdot 131^3$} \Tstrut \Bstrut \\
46 & {$2^{96} \cdot 3^{126} \cdot 5^{78} \cdot 11^{30} \cdot 17^{15} \cdot 29^6 \cdot 41^6 \cdot 47^{12} \cdot 59^6 \cdot 71^6 \cdot 89^9 \cdot 101^6 \cdot 107^6 \cdot 113^3 \cdot 131^6 \cdot 137^3 $}  \\
47 & {$2^{258} \cdot 3^{48} \cdot 5^{51} \cdot 11^{24} \cdot 17^{12} \cdot 23^9 \cdot 29^9 \cdot 41^3 \cdot 47 \cdot 89^6 \cdot 113^6 \cdot 137^3$} \Tstrut \Bstrut \\
48 & {$ 2^{12} \cdot 3^{24} \cdot 5^{78} \cdot 11^{36} \cdot 17^{18} \cdot 23^6 \cdot 29^6 \cdot 41^{12} \cdot 47^9 \cdot 53^{12} \cdot 71^6 \cdot 83^6 \cdot 101^6 \cdot 107^6 \cdot 113^6 \cdot 131^6 \cdot 137^6 $}  \\
49 & {$2^{222} \cdot 3^{69} \cdot 5^{42} \cdot 7 \cdot 11^{18} \cdot 17^{12} \cdot 23^6 \cdot 29^6 \cdot 47^3 \cdot 71^6 \cdot 83^3 \cdot 131^3$} \Tstrut \Bstrut \\
50 & {$2^{120} \cdot 3^{90} \cdot 5^3 \cdot 11^{42} \cdot 17^{24} \cdot 23^{12} \cdot 29^{15} \cdot 41^{12} \cdot 47^{12} \cdot 53^6 \cdot 59^{12} \cdot 71^6 \cdot 83^6\cdot 89^6 \cdot 101^9 \cdot 107^6 \cdot 113^6 \cdot 131^6 \cdot 137^6 \cdot 149^3$} \Bstrut \\

\end{longtable}
\end{center}

\section*{Acknowledgments}
The author would like to thank his supervisor Fabien Pazuki, for his guidance and advice, and Philipp Habegger for the helpful suggestions and comments. He would also like to thank Riccardo Pengo and Peter Stevenhagen for the useful discussions, and the anonymous referee for the careful reading and the many insightful comments. 

\vspace{\baselineskip}
\noindent
\framebox[\textwidth]{
\begin{tabular*}{0.96\textwidth}{@{\extracolsep{\fill} }cp{0.84\textwidth}}
\raisebox{-0.7\height}{%
    \begin{tikzpicture}[y=0.80pt, x=0.8pt, yscale=-1, inner sep=0pt, outer sep=0pt, 
    scale=0.12]
    \definecolor{c003399}{RGB}{0,51,153}
    \definecolor{cffcc00}{RGB}{255,204,0}
    \begin{scope}[shift={(0,-872.36218)}]
      \path[shift={(0,872.36218)},fill=c003399,nonzero rule] (0.0000,0.0000) rectangle (270.0000,180.0000);
      \foreach \myshift in 
           {(0,812.36218), (0,932.36218), 
    		(60.0,872.36218), (-60.0,872.36218), 
    		(30.0,820.36218), (-30.0,820.36218),
    		(30.0,924.36218), (-30.0,924.36218),
    		(-52.0,842.36218), (52.0,842.36218), 
    		(52.0,902.36218), (-52.0,902.36218)}
        \path[shift=\myshift,fill=cffcc00,nonzero rule] (135.0000,80.0000) -- (137.2453,86.9096) -- (144.5106,86.9098) -- (138.6330,91.1804) -- (140.8778,98.0902) -- (135.0000,93.8200) -- (129.1222,98.0902) -- (131.3670,91.1804) -- (125.4894,86.9098) -- (132.7547,86.9096) -- cycle;
    \end{scope}
    \end{tikzpicture}%
}
&
This project has received funding from the European Union Horizon 2020 research and
innovation programme under the Marie Sk{\l}odowska-Curie grant agreement No 801199.
\end{tabular*}
}
  

\bibliographystyle{amsplain}

\end{document}